\documentclass[11pt,twoside]{amsart}
\usepackage[latin9]{inputenc}
\usepackage{verbatim}
\usepackage{calc}
\usepackage{framed}
\usepackage{amsthm}
\usepackage{amstext}
\usepackage{amssymb}
\usepackage{esint}
\usepackage[unicode=true,pdfusetitle,
 bookmarks=true,bookmarksnumbered=false,bookmarksopen=false,
 breaklinks=false,pdfborder={0 0 1},colorlinks=false]{hyperref}
\makeatletter

\setlength{\oddsidemargin}{0mm}
\setlength{\evensidemargin}{0mm}
\setlength{\topmargin}{5.2mm}
\setlength{\textheight}{22cm}
\setlength{\textwidth}{17cm}

\numberwithin{equation}{section}
\numberwithin{figure}{section}
\theoremstyle{plain}
\newtheorem{thm}{\protect\theoremname}
  \theoremstyle{definition}
  \newtheorem{defn}[thm]{\protect\definitionname}
  \theoremstyle{remark}
  
  \theoremstyle{plain}
  \newtheorem{lem}[thm]{\protect\lemmaname}
  \theoremstyle{plain}
  \newtheorem{prop}[thm]{\protect\propositionname}

\usepackage{amssymb,amsthm,amsmath,amsfonts,amscd}
\usepackage{graphicx}
\usepackage{url}
\usepackage{color}
\usepackage[active]{srcltx}
\usepackage[matrix,arrow]{xy}
\usepackage{mathrsfs}
\usepackage{enumerate}
\usepackage{amsopn}
\usepackage{bbm} 
\usepackage{cite}
\allowdisplaybreaks[1]
\usepackage[english]{babel}
\usepackage{bbm}
\usepackage{stmaryrd}
\usepackage{mdef}

\setlength{\parindent}{0pt}

\date{\today}

\makeatother
\newcommand {\no}{\noindent}
  \providecommand{\definitionname}{Definition}
  \providecommand{\lemmaname}{Lemma}
  \providecommand{\propositionname}{Proposition}
  \providecommand{\remarkname}{Remark}
\providecommand{\theoremname}{Theorem}

\begin{document}

\title[Long-time behavior, invariant measures  and regularizing effects for SSCL]
{Long-time behavior, invariant measures and regularizing effects for stochastic scalar conservation laws}
\begin{abstract}
\noindent We study the long-time behavior and regularity of the pathwise entropy solutions  to stochastic scalar conservation laws with random in time spatially homogeneous fluxes and periodic initial data. We prove that the solutions converge to their spatial average, which is the unique invariant measure of the associated random dynamical system, and provide a rate of convergence, the latter being  new even in the deterministic case for dimensions higher than two. The main tool is a new regularization result in the spirit of averaging lemmata for scalar conservation laws, which, in particular, implies a regularization by noise-type result for pathwise quasi-solutions. 
\end{abstract}

\author{Benjamin Gess}

\address{Department of Mathematics \\
 University of Chicago \\
 Chicago, IL 60637 \\
 USA }

\email{gess@uchicago.edu}

\author{Panagiotis E. Souganidis }

\address{Department of Mathematics \\
 University of Chicago \\
 Chicago, IL 60637 \\
 USA }

\email{souganidis@math.uchicago.edu }

\keywords{Stochastic scalar conservation laws, averaging lemma, invariant measure, random dynamical systems, random attractor, regularity, regularization by noise.}

\subjclass[2000]{H6015, 35R60, 35L65.}

\thanks{Benjamin Gess is supported by the DFG through a research scholarship. Panagiotis Souganidis is supported by the NSF grant  DMS-1266383.}

\maketitle

\section{Introduction and main results}
\subsection{An overview}
\noindent We are interested in the long-time behavior, the existence and uniqueness of invariant measures and  the regularity of pathwise entropy solutions of the spatially homogeneous stochastic scalar conservation laws (SSCL for short) 
\begin{equation}
\begin{cases}
du+{\displaystyle \sum_{i=1}^{N}\partial_{x_{i}}A^{i}(u)\circ d\b_{t}^{i}=0 \ \text{ in } \ \TT^{N}\times(0,\infty),}\\[1.5mm]
u=u_{0} \ \text{ on } \ \TT^{N} \times \{0\},
\end{cases}\label{eq:scl}
\end{equation}
where $\TT^{N}$ is the $N$-dimensional torus,  $\circ$ denotes the Stratonovich differential, the flux $\bf A$ is smooth with polynomial growth, that is, setting 
$${\bf A}''(\xi):=\partial^2_\xi{\bf A}(\xi) \ \text{ and } \  {\bf a}(\xi):= {\bf A} '(\xi):=\partial _\xi \bf A(\xi),$$
we assume that, for some $C> 0, m \in \N$ and all $\xi \in \R$, 
\begin{equation}\label{a}
{\bf A}=(A^{1},\ldots,A^{N})\in C^{2}(\R;\R^{N}) \ \  \text{and} \ \ 
 |{\bf A}''(\xi)|\le C(1+|\xi|^m),
\end{equation}
\begin{equation}\label{b}
 {\boldsymbol  \b}=(\b^{1},\ldots,\b^{N}) \ \text { is a standard two-sided Brownian motion},
 \end{equation}
 and
\begin{equation}\label{id}
u_0 \in L^1(\TT^N). 
\end{equation}

\noindent  In order to have both nontrivial asymptotic behavior as well as to observe regularizing effects, we need to exclude the linear case by assuming that the flux is genuinely nonlinear. Since our estimates are based on a new stochastic averaging-type lemma (Theorem~\ref{thm:reg} below), it is convenient to quantify this property in a measure theoretic way. 

\noindent We assume that there exist $\t\in(0,1]$ and  $C>0$ such that, 
for all $ { \s} \in S^{N-1}$, ${\ z}\in\R^{N}$ and $\ve>0$,
\begin{equation}\label{flux}
  |\{\xi \in \R: |{\bf a}(\xi){\sigma}-{z}|\le\ve\}|\le C \ve^{\theta}, 
\end{equation}
where $S^{N-1}$ is the unit sphere in $\R^N$ and, for  ${x}=(x_1,\ldots,x_N), {y}=(y_1,\ldots,y_N) \in \R^N,$ we set
 ${ x y}:=(x_1y_1,\dots,x_Ny_N).$
\smallskip

The genuine nonlinearity condition assumed typically in the deterministic setting, that is when $ {\boldsymbol  \b}=(t,\dots,t)$, is that there exist $\t\in(0,1]$ and  $C>0$ such that, 
  for all $ { \s} \in S^{N-1}$, ${z}\in\R$ and $\ve>0$,
  \begin{equation}\label{det-flux}
|\{\xi \in \R: |{\bf a}(\xi)\cdot{ \sigma}-{z}|\le\ve\}|\le C \ve^{\theta},
  \end{equation}
 where $ {x} \cdot { y}$ denotes the inner product of the vectors ${x}, {y} \in \R^N.$ 
 \smallskip 

Since,  for some constant $C> 0$,
 \[|{\bf a}(\xi){\sigma}-{z}|\ge C |{\bf a}(\xi)\cdot{\sigma}-\sum_{i=1}^Nz_i|,\]
 it is immediate that 
 \eqref{det-flux} implies  \eqref{flux}. 
 
The following example shows, however,  that \eqref{flux} is strictly weaker than \eqref{det-flux} in dimensions larger than one. We fix $N=2$ and  some  $l\in\N$ and consider the flux $\mathbf{A}:\R \to \R^2$ given by
   \[
   \mathbf{A}(\xi)=(\frac{1}{l+1}\xi^{l+1}, \frac{1}{l+1}\xi^{l+1}).
   \]
  Then 
  \[
  \mathbf{a}(\xi)=(\xi^{l}, \xi^{l}).
  \]
  In this case, if  $\s=(\frac{1}{\sqrt{2}}, -\frac{1}{\sqrt{2}})$  and $z=0$,  we have  $ \mathbf{a}(\xi)\cdot\sigma-z  =0, $ and, hence, 
   \[
   |\{\xi\in\R:|\mathbf{a}(\xi)\cdot\sigma-z|\le\ve\}|=\infty
   \]
   and \eqref{det-flux} cannot be satisfied. 
   
   \noindent On the other hand, if $\sigma=(\sigma_1,\sigma_2) \in S^1$ and $z=(z_1,z_2)$, 
  \[
   |\mathbf{a}(\xi)\sigma-z|^{2}  =|\xi^l \sigma_1 -z_{1}|^{2}+|\xi^l \sigma_2 -z_{2}|^{2}
   \]
   Since $|\sigma|=1$ yields $\sigma_{1}>\frac{1}{\sqrt{2}}$ or $\sigma_{2}>\frac{1}{\sqrt{2}}$, without loss of generality,  next we assume that  $\sigma_{1}>\frac{1}{\sqrt{2}}$. Then 
   \begin{align*}
   |\{\xi\in\R:|\mathbf{a}(\xi)\sigma-z|\le\ve\}| & =|\{\xi\in\R:|\xi^{l}\sigma_{1}-z_{1}|^{2}+|\xi^{l}\sigma_{2}-z_{2}|^{2}\le\ve^{2}\}|\\
    & \le|\{\xi\in\R:|\xi^{l}\frac{\sigma_{1}}{|\sigma_{1}|}-\frac{z_{1}}{|\sigma_{1}|}|\le\frac{\ve}{|\sigma_{1}|}\}|\\
    & \le\frac{C}{|\sigma_{1}|^{\frac{1}{l}}}\ve^{\frac{1}{l}}\le\ 2^{l/2}C\ve^{\frac{1}{l}},
   \end{align*}
   where the first inequality follows from condition \eqref{det-flux} for the one dimensional flux $a_{1}(\xi)=\xi^{l}$. Hence, \eqref{flux} is satisfied.
\smallskip
 
 \noindent The exponent $\t$ in \eqref{det-flux} depends on the dimension $N$. Indeed, it was shown in  Berthelin  and Junka \cite{BJ10} that, if $\bf A$ is smooth, then necessarily $\t \in (0,\frac{1}{N}]$. 
    
 \noindent In contrast, this is not true for \eqref{flux}. Indeed, as we have seen above, for all $j = 1,\dots,N$,  in general  we have,  
       \begin{align*}
       |\{\xi\in\R:|\mathbf{a}(\xi)\sigma-z|\le\ve\}| & =|\{\xi\in\R:\sum_{i=1}^{N}|a_{i}(\xi)\sigma_{i}-z_{i}|^{2}\le\ve^{2}\}|\\
        & \le|\{\xi\in\R:|a_{j}(\xi)\sigma_{j}-z_{j}|\le\ve\}|.
       \end{align*}
       Hence, if the one dimensional fluxes $a_{j}$ satisfiy \eqref{flux} with exponent $\theta_{j}$, then \textbf{$\mathbf{a}$} satisfies \eqref{flux} with $\theta=\min(\theta_{1},\dots,\theta_{N})$. For example, in the particular case that  $\mathbf{a}(\xi)=(\xi,\dots,\xi)$ which satisfies \eqref{flux} with $\theta=1$, we obtain $\theta=1$ in  \eqref{flux} and there is no dependence on the dimension. 
\smallskip

The main results of the paper are (i)~the quantitative convergence, as $t \to \infty$,  of solutions to \eqref{eq:scl} to the spatial average of the initial data, which turns out to be the unique invariant measure of the associated random dynamical system, (ii)~a new regularizing property (averaging lemma)  for \eqref{eq:scl},  (iii)~a rate for the convergence, as $t \to \infty$, of the deterministic entropy solutions to the their mean, and (iv)~a ``regularization by noise''-type result for pathwise quasi-solutions to \eqref{eq:scl}. 
\smallskip

The convergence to the spatial average, the rate of convergence and the regularizing effect are new results and, to the best of our knowledge, the only available for nonlinear problems with random fluxes. Providing a rate of convergence for deterministic scalar conservation laws with $N>2$ solves a long-standing open problem. Concerning the regularization by noise, we show that pathwise quasi-solutions to the (stochastic) Burgers' equation are more regular than in the deterministic case. Again, to the best of our knowledge, this is the first such result for nonlinear random fluxes.
\smallskip

\subsection{The general setting}
\noindent  Without loss of generality in the following we work with  the filtered probability space $(\O, \mcF,(\mcF_{t})_{t\in\R_+}, \P)$ with the canonical realization on $\O=C_{0}(\R;\R^{N}):=\{{\bf b} \in C(\R; \R^N) \ \text{ and } \ {\bf b}(0)=0 \}$,  and $\P$, $\E$, $\mcF_{t}$ and $\bar{\mcF}_{t}$ being respectively the two-sided standard Gaussian measure on $\O$, the expectation with respect to $\P$, the canonical, uncompleted filtration and its completion. 
\medskip

\noindent  Lions, Perthame and Souganidis  introduced in \cite{LPS13} the notion of  pathwise entropy solutions to \eqref{eq:scl} (actually \cite{LPS13} considered general continuous paths $\boldsymbol \b$) and  showed that, for each $u_{0}\in (L^{1}\cap L^{\infty})(\R^N)$, each path $t\mapsto {\boldsymbol\b}_{t}(\o)$ and all $T>0$, there exists  a unique pathwise entropy solution $u=u(\cdot ; {\boldsymbol \b}, u_{0})=u(\cdot; \omega, u_0)  \in C\big([0,\infty); L^1(\R^N)\big)\cap L^\infty (\R^N \times (0,T)) $  to \eqref{eq:scl} and the solution operator is an $L^1$-contraction and, hence,  is defined on $ L^{1}(\R^N).$\\
\noindent  A straightforward modification of the arguments in \cite{LPS13} yields that the theory extends to \eqref{eq:scl} and is well posed in $L^\infty(\TT^N)$, that is, for each $u_{0}\in L^{\infty}(\TT^N)$, each path $t\mapsto {\boldsymbol\b}_{t}(\o)$ and all $T>0$, there exists  a unique pathwise entropy solution $u=u(\cdot ; {\boldsymbol \b}, u_{0})=u(\cdot; \omega, u_0)  \in C\big([0,\infty); L^1(\TT^N)\big)\cap L^\infty (\TT^N \times [0,T]) $, and the solution operator is an $L^1$-contraction and, hence,  is defined on $ L^{1}(\TT^N).$

\medskip
\noindent Since the entropy solution to \eqref{eq:scl}  is constructed in a pathwise manner,  for each $u_0 \in  L^{1}(\TT^N)$ and $t\geq 0$, the map 
\begin{equation}\label{takis0}
\vp(t,\o)u_0:=u(\cdot, t; \o, u_0) 
\end{equation}
defines a continuous random dynamical system (RDS for short) on $L^{1}(\TT^N)$; we refer to  Appendix~\ref{app:RDS} for some background on RDS. 
\medskip

The associated  Markovian semigroup  $(P_{t})_{t \geq 0}$ 
is given, for each bounded  measurable function $f$ on $L^{1}(\TT^N)$,  $u_0 \in  L^{1}(\TT^N)$ and $t\geq 0$, by 
\[
P_{t}f(u_0):=\E f(u(\cdot,t ;\cdot, u_0))=\E f(\vp(t,\cdot)u_0).
\]
By duality we may consider the action  of $(P_{t})_{t \geq 0}$ on the space $\mcM_1$ of probability measures  on $L^{1}(\TT^N)$, that is, for $\mu\in \mcM_1$,  we set 
$$P_{t}^* \mu (f) := \int_{L^1} P_{t}f(x)d\mu(x).$$

 A probability measure $\mu$ is  an invariant measure for  $(P_{t})_{t \geq 0}$ if 
 $P_{t}^* \mu=\mu$ for all $t\ge 0$.
Moreover  $\mu$ is said to be strongly mixing if, for each $\nu \in \mcM_1$,
  $P_{t}^* \nu \rightharpoonup \mu \  \text{weak}\star \text{ in }\mcM_1$ as $t \to \infty$.
\smallskip

\subsection{The results} We prove here (see Theorem~\ref{main} below) that, as $t\to \infty$ and $\P$-almost surely  (a.s.\ for short),  
$$u(\cdot, t;\omega,u_0) \to \bar{u}_{0}:=\int_{ \TT^{N}}u_{0}(x)dx,$$  
provide a convergence rate and show that  $\d_{\bar{u}_{0}}$  is the unique invariant measure and $\bar{u}_{0}$ the random attractor. Here $\delta_c$ denotes the ``Dirac mass'' measure in $L^1 (\TT^N)$ charging the constant function with value $c\in \R$ and we set $L^1_c(\TT^N)$ to be the space of all $L^1(\TT^N)$ functions with spatial average $c$.
\medskip

\noindent 
The first  result is stated in the next theorem; for some of the terms in the statement  we refer to Appendix~\ref{app:RDS}.
\begin{thm}\label{main}
Assume \eqref{a}, \eqref{b},  \eqref{id}  and  \eqref{flux}. Then, as $t\to \infty$,  
\begin{equation*} 
   u(\cdot, t; \cdot, u_0)\to\bar{u}_0 \ \text{ in $L^{1}(\O;L^{1}(\TT^{N}))$ and $\P$-a.s. in $L^{1}(\TT^{N})$}; 
   \end{equation*}
in particular, $\d_{\bar u_0}$ is the unique invariant measure for $(P_{t})_{t\geq 0}$ on $L^{1}_{\bar u_0}(\TT^{N})$ and is strongly mixing, and, restricted to $L^1_{\bar u_0}(\TT^N)$,  the RDS $\vp$ has ${\bar u_0}$ as forward and pullback random attractor.
Moreover, 
for $t\geq 1$ and $u_0 \in L^{2+m}(\TT^N)$,
\[
\E\|u(\cdot, t;\cdot,u_0)-\bar{u}_0\|_{1}\le \left(\|u_{0}\|_{2+m}^{2+m}+1\right)  t^{-\frac{\t}{3+\t}},
\]
and, for all $p\in (1,\infty)$,
\[
\E\|u(\cdot, t;\cdot,u_0)-\bar{u}_0\|_{p}\le 2\|u_0\|_\infty (\left \|u_{0}\|_{2+m}^{2+m}+1\right) t^{-\frac{\t}{3+\t}}.
\]
\end{thm}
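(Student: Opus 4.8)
The plan is to prove the quantitative $L^{1}$ estimate first, since it is the only substantial point, and then to read off the $L^{p}$ estimate, the almost sure convergence, and the statements about the invariant measure and the attractor as corollaries. First I would record the pathwise a priori facts that do not require Theorem~\ref{thm:reg}. Integrating \eqref{eq:scl} over $\TT^{N}$, the divergence fluxes vanish by periodicity, so the spatial mean is conserved, $\int_{\TT^{N}}u(\cdot,t;\o,u_{0})\,dx=\bar u_{0}$ for all $t$ and $\o$; it is therefore natural to set $v:=u-\bar u_{0}$, which has zero mean. Because constants solve \eqref{eq:scl}, applying the $L^{1}$-contraction to the pair $(u,\bar u_{0})$ shows that $t\mapsto\|v(t)\|_{1}$ is nonincreasing, pathwise. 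For any convex entropy the pathwise entropy inequality, integrated over $\TT^{N}$, gives that $t\mapsto\int_{\TT^{N}}\eta(u(\cdot,t))\,dx$ is nonincreasing; in particular $\|u(t)\|_{q}\le\|u_{0}\|_{q}$ for every $q$, $\|u(t)\|_{\infty}\le\|u_{0}\|_{\infty}$, and the quadratic entropy yields the global dissipation bound $\E\iint d\mathfrak m\le\tfrac12\|u_{0}\|_{2}^{2}$, where $\mathfrak m\ge0$ is the kinetic measure of the kinetic formulation of \eqref{eq:scl}.

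The heart of the argument, and the step I expect to be the main obstacle, is to convert smallness of the dissipation into smallness of $\|v\|_{1}$ with a quantified rate. Fix $t\ge1$. Since $t\mapsto\E\iint d\mathfrak m_{t}$ is integrable with total mass controlled by $\|u_{0}\|_{2}^{2}$, a mean value argument over the window $[t/2,t]$ selects a deterministic time $t^{\ast}\in[t/2,t]$ at which the expected instantaneous dissipation rate is $O(1/t)$. At such a time I would invoke the stochastic averaging estimate of Theorem~\ref{thm:reg}: applied to the kinetic function of $v(t^{\ast})$ it bounds a fractional Sobolev norm $\|v(t^{\ast})\|_{W^{\s,1}}$, with $\s=\s(\t)>0$, in terms of the local dissipation together with the flux, the latter being controlled via the polynomial growth \eqref{a} by $\int_{\TT^{N}}(1+|u|^{m+2})\,dx$ and hence by $\|u_{0}\|_{2+m}^{2+m}+1$; this is exactly where the hypothesis $u_{0}\in L^{2+m}$ and the additive constant $1$ enter. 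Truncating $v(t^{\ast})$ at a frequency scale $R$, the high frequencies are absorbed by $R^{-\s}\|v(t^{\ast})\|_{W^{\s,1}}$ while the remaining non-oscillatory part is controlled, using the zero mean and the genuine nonlinearity \eqref{flux}, by the smallness of the dissipation at $t^{\ast}$.

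The difficulty here is genuine: a uniform positive-regularity bound together with zero mean does not by itself force smallness in $L^{1}$, so one must balance the regularity gain coming from \eqref{flux} against the decaying dissipation, and it is precisely the optimization of $R$ against the $O(1/t)$ dissipation and the flux bound that produces the exponent $\tfrac{\t}{3+\t}$, giving $\E\|v(t^{\ast})\|_{1}\lesssim(\|u_{0}\|_{2+m}^{2+m}+1)\,t^{-\frac{\t}{3+\t}}$. Since $t^{\ast}\le t$, the pathwise monotonicity of $\|v(\cdot)\|_{1}$ yields $\|v(t)\|_{1}\le\|v(t^{\ast})\|_{1}$ for every $\o$, and taking expectations gives the first displayed estimate.

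Finally I would collect the consequences. The $L^{p}$ estimate follows by interpolating the $L^{1}$ rate with the uniform bound $\|v(t)\|_{\infty}\le2\|u_{0}\|_{\infty}$ through $\|v\|_{p}\le\|v\|_{\infty}^{1-1/p}\|v\|_{1}^{1/p}$. For the almost sure convergence, $\|v(t)\|_{1}$ is nonincreasing and nonnegative, hence admits a pathwise limit $\ell(\o)\ge0$; since $\E\ell=\lim_{t\to\infty}\E\|v(t)\|_{1}=0$ by the rate, $\ell=0$ $\P$-a.s., so $u(\cdot,t)\to\bar u_{0}$ in $L^{1}(\O;L^{1}(\TT^{N}))$ and $\P$-a.s. in $L^{1}(\TT^{N})$. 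That $\d_{\bar u_{0}}$ is invariant is immediate, since constants are fixed points of $\vp$; uniqueness on $L^{1}_{\bar u_{0}}(\TT^{N})$ and strong mixing follow from the pointwise convergence $\vp(t,\o)u_{0}\to\bar u_{0}$ and bounded convergence, which force $P_{t}^{\ast}\nu\rightharpoonup\d_{\bar u_{0}}$ for every admissible $\nu\in\mcM_{1}$. Since the rate depends on $u_{0}$ only through its norms, the convergence is uniform on bounded sets of data, which in the framework of Appendix~\ref{app:RDS} identifies $\{\bar u_{0}\}$ as both the forward and the pullback random attractor of $\vp$ restricted to $L^{1}_{\bar u_{0}}(\TT^{N})$.
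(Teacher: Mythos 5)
Your peripheral reductions are all sound and match the paper: conservation of the mean, pathwise monotonicity of $t\mapsto\|u(t)-\bar u_0\|_1$ via the $L^1$-contraction against the constant solution, a.s.\ convergence from monotonicity plus decay of expectations, and the invariant measure/attractor statements deduced from the pointwise convergence and a density-plus-contraction argument. The genuine gap is in your central step: the claim that, at a well-chosen time $t^\ast\in[t/2,t]$, smallness of the expected instantaneous dissipation together with the uniform $W^{\sigma,1}$ bound of Theorem~\ref{thm:reg} and the zero mean forces $\E\|v(t^\ast)\|_1\lesssim t^{-\theta/(3+\theta)}$. No such static mechanism exists. The genuine nonlinearity condition \eqref{flux} constrains the \emph{dynamics} (the velocity $\mathbf{a}(\xi)$ of the kinetic transport), not the spatial profile at a fixed time; a snapshot $v(t^\ast)$ carries no trace of $\mathbf{a}$ whatsoever. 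Concretely, for Burgers' flux the zero-mean profile $\sin(2\pi x)$ is smooth, hence has a uniformly bounded $W^{\sigma,1}$ norm, and the entropy solution through it has identically vanishing dissipation measure on a nontrivial time interval (smooth solutions dissipate nothing before shocks form), yet its $L^1$ norm is of order one. So your frequency truncation fails exactly where you flagged the difficulty: nothing bounds the low-frequency part by the local dissipation, and the exponent $\theta/(3+\theta)$ is reverse-engineered from the statement rather than derived. Genuine nonlinearity produces a ``disperse or dissipate'' dichotomy only in \emph{integrated} time, over windows whose length tends to infinity; it cannot be localized to a single time slice.

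This is precisely what the paper's proof implements, and it is not a consequence of Theorem~\ref{thm:reg} used as a black box (in the paper the two theorems are proved by the same estimates, not one from the other). The kinetic equation is perturbed by $\gamma B$, $B=(-\Delta)^{\alpha}+Id$, and Duhamel's formula gives the splitting $u=u^0+u^1+Q$: the initial-data term satisfies $\E\int_0^T\|u^0(t)\|_2^2\,dt\lesssim\gamma^{-(2-\theta)/2}\|u_0\|_1$ by the Gaussian averaging estimate of Lemma~\ref{lem:integral_estimate} (this is the dispersive decay mechanism); the term $u^1$ is of size $\gamma^{\theta/2}\E\int_0^T\|u(t)\|_1\,dt$ and is absorbed into the left-hand side; and the dissipation term $Q$ contributes $\gamma^{-3/2}\bigl(\|u_0\|_2^2+\|u_0\|_{m+2}^{m+2}\bigr)$, so the total dissipation enters only through its Ces\`aro average in $T$. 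Monotonicity of $\|u(t)\|_1$ then converts the bound on $T^{-1}\E\int_0^T\|u(t)\|_1\,dt$ into a bound at time $T$, and the choice $\gamma=T^{-2/(3+\theta)}$, balancing $\gamma^{\theta/2}$ against $\gamma^{-3/2}T^{-1}$, produces the exponent $\theta/(3+\theta)$. To salvage your outline you would have to run the averaging argument over windows of length comparable to $t$, at which point you are reconstructing the paper's proof rather than deducing the rate from Theorem~\ref{thm:reg}. A smaller point: your interpolation $\|v\|_p\le\|v\|_\infty^{1-1/p}\|v\|_1^{1/p}$, after taking expectations, yields the rate $t^{-\theta/(p(3+\theta))}$, not the rate $t^{-\theta/(3+\theta)}$ claimed in the $L^p$ estimate, so that deduction also needs more care.
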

\smallskip

Following the strategy of the proof of Theorem~\ref{main}, we also obtain a rate for the asymptotic behavior of the entropy solutions to the deterministic conservation law
\begin{equation}
\begin{cases}
\partial_t u+{\displaystyle \sum_{i=1}^{N}\partial_{x_{i}}A^{i}(u) =0 \ \text{ in } \ \TT^{N}\times(0,\infty),}\\[1.5mm]
u=u_{0} \ \text{ on } \ \TT^{N}\times\{0\}.
\end{cases}\label{eq:scl-det}
\end{equation}

\no The result is stated next.
\begin{thm}\label{prop:det}
Assume \eqref{a}, \eqref{id}  and  \eqref{det-flux}. Then, as $t\to \infty,$ 
\begin{equation*} 
   u(\cdot, t; \cdot, u_0)\to\bar{u}_0 \ \text{ in } L^{1}(\TT^{N}). 
\end{equation*}
Moreover, 
for $t\geq 1$ and $u_0 \in L^{2+m}(\TT^N)$,
\[
\|u(\cdot, t;\cdot,u_0)-\bar{u}_0\|_{1}\le \left(\|u_{0}\|_{2+m}^{2+m}+1\right) t^{-\frac{\t}{2+\t}}.
\]
\end{thm}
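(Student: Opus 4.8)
The plan is to follow the scheme of the proof of Theorem~\ref{main} with the noise switched off, and to exploit the one structural simplification this brings: deterministic transport by ${\bf a}(\xi)\cdot\nabla_x$ over a time window of length $\tau$ moves characteristics a distance of order $\tau$, whereas the Stratonovich transport of \eqref{eq:scl} moves them only a distance of order $\sqrt\tau$; replacing $\sqrt\tau$ by $\tau$ in the averaging estimate improves the final balance and is what is responsible for improving the exponent $\tfrac{\theta}{3+\theta}$ of Theorem~\ref{main} to $\tfrac{\theta}{2+\theta}$. Concretely, I would first pass to the kinetic formulation of the entropy solution of \eqref{eq:scl-det}: with $\chi(v,\xi):=\mathbbm 1_{0<\xi<v}-\mathbbm 1_{v<\xi<0}$ and $f(x,\xi,t):=\chi(u(x,t),\xi)$, there is a nonnegative kinetic defect measure $m$ on $\TT^N\times\R\times(0,\infty)$ with
\[
\partial_t f+{\bf a}(\xi)\cdot\nabla_x f=\partial_\xi m .
\]
This is the deterministic specialization of the equation underlying Theorem~\ref{thm:reg} along the path ${\boldsymbol\beta}_s=(s,\dots,s)$, for which the relevant transport symbol is ${\bf a}(\xi)\cdot\sigma$ and the genuine-nonlinearity hypothesis is precisely \eqref{det-flux}.

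Next I would record the three elementary facts that drive the asymptotics. Since \eqref{eq:scl-det} is in conservation form on $\TT^N$, the spatial mean is conserved, $\overline{u(\cdot,t)}=\bar u_0$ for all $t\ge 0$. From the quadratic ($L^2$) entropy balance, and after subtracting $\bar u_0$, one gets the global dissipation bound $\int_0^\infty\!\int_{\TT^N}\!\int_\R dm\le\tfrac12\|u_0-\bar u_0\|_2^2<\infty$, so that the windowed dissipation $\int_t^{t+\tau}\!\int_{\TT^N}\!\int_\R dm$ is small for large $t$. Finally, because the constants are stationary solutions and the solution map is an $L^1$-contraction, the function $g(t):=\|u(\cdot,t)-\bar u_0\|_1$ is non-increasing; the higher $L^p$-norms are likewise non-increasing, which together with \eqref{a} will let me truncate large velocities.

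The heart of the argument is the regularization estimate of Theorem~\ref{thm:reg}, applied on a window $[t,2t]$. After truncating velocities at a level $|\xi|\le K$, it produces, for some $s_0=s_0(\theta)\in(0,1)$, a quantitative genuine-nonlinearity bound in which the spatial fractional regularity of the time average of $u$ is controlled by the windowed kinetic defect rather than by the full size of $u$ --- schematically, the time average over $[t,2t]$ of the Gagliardo seminorm $[\,u(\cdot,s)\,]_{W^{s_0,1}(\TT^N)}$ is dominated by a positive power of $K$ times a positive power of $\tfrac1t\int_t^{2t}\!\int_{\TT^N}\!\int_{|\xi|\le K}dm$, plus a large-velocity tail that \eqref{a} and the conserved moment bound control by a negative power of $K$ times $\|u_0\|_{2+m}^{2+m}$; this last term is the sole place where $u_0\in L^{2+m}$ is needed. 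I would then close the estimate by feeding in the dissipation balance, $\int_t^{2t}\!\int\!\int dm=\tfrac12(\|u(\cdot,t)-\bar u_0\|_2^2-\|u(\cdot,2t)-\bar u_0\|_2^2)$, and optimize over the two free parameters, the window length (here comparable to $t$) and the cutoff $K$. Balancing the defect contribution against the velocity tail yields, via a fractional Poincaré inequality on $\TT^N$ (legitimate since the mean is fixed at $\bar u_0$), a decay of the time-averaged deviation of order $(\|u_0\|_{2+m}^{2+m}+1)\,t^{-\theta/(2+\theta)}$; because $g$ is non-increasing, its value at the right endpoint is bounded by its average over the window, which promotes this to the pointwise-in-time estimate asserted for $t\ge1$. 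The qualitative convergence $u(\cdot,t)\to\bar u_0$ in $L^1(\TT^N)$ for arbitrary $u_0\in L^1$ then follows by approximating $u_0$ in $L^1$ by $L^{2+m}$ data and transferring the rate uniformly in $t$ through the $L^1$-contraction.

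I expect the main obstacle to be this third step, i.e.\ the effective, quantitative use of genuine nonlinearity through Theorem~\ref{thm:reg} together with the attendant optimization. The delicate points are tracking the joint dependence of the averaging estimate on the window length, the velocity cutoff $K$ and the (merely qualitatively small) windowed dissipation, and controlling the large-velocity contribution through the polynomial growth \eqref{a} and the conserved $L^{2+m}$-moment; getting the powers of $K$ and the window-length factors to balance is exactly what pins the exponent $\theta/(2+\theta)$, and it is the faithful deterministic counterpart of the corresponding computation in the proof of Theorem~\ref{main}.
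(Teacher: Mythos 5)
Your preliminaries are all correct and also appear in the paper's argument: the kinetic formulation with nonnegative defect measure, conservation of the spatial mean, the global dissipation bound from Lemma \ref{lem:m-bound}, the monotonicity of $t\mapsto\|u(\cdot,t)-\bar u_0\|_1$, and the final device of converting time-averaged decay into pointwise decay via this monotonicity. The gap lies in what you yourself call the heart of the argument. No averaging lemma --- in particular not the one behind Theorem \ref{thm:reg} --- controls the fractional regularity of $u$ on a time window by powers of the windowed kinetic defect alone (plus a large-velocity tail). Every such estimate necessarily contains a dispersive term proportional to the size of the solution at the start of the window, coming from the free transport of the data; this is exactly the piece $u^0$ in the decomposition $u=u^0+u^1+Q$ of \eqref{takis5}, and it does not vanish when $m$ does. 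Concretely, a smooth entropy solution before shock formation has $m\equiv 0$ on a whole time window while being non-constant; your claimed bound (with the cutoff $K$ sent to $\infty$, which kills the tail for bounded solutions) would force its $W^{s_0,1}$-seminorm to vanish, a contradiction. So the fractional Poincar\'e step cannot close.

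A second, independent gap: even granting such an estimate, no algebraic rate can be extracted from the windowed dissipation. All one knows is $\int_t^{2t}\int_{\TT^N\times\R}dm\le\int_t^{\infty}\int_{\TT^N\times\R}dm\to 0$, and the tail of a convergent integral may decay arbitrarily slowly; this is precisely why the results of \cite{CF99,CP09,D13,P13} are qualitative. In the paper the dissipation is used only as a quantity bounded by $\|u_0\|_2^2+\|u_0\|_{m+2}^{m+2}$; the rate comes from a different mechanism, namely the artificial regularization $\gamma B=\gamma\left((-\Delta)^\alpha+\mathrm{Id}\right)$ and the explicit $\gamma$-dependence of the three pieces on all of $[0,T]$. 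By the deterministic averaging estimates of \cite[Lemma 2.4]{BD99} one has, schematically, $\int_0^T\|u^0(t)\|\,dt\lesssim\gamma^{\theta-1}\|u_0\|_1$, $\int_0^T\|u^1(t)\|\,dt\lesssim\gamma^{\theta}\int_0^T\|u(t)\|_1\,dt$ in the appropriate $H^s$ norms, and $\|Q\|\lesssim\gamma^{-2}\left(\|u_0\|_2^2+\|u_0\|_{m+2}^{m+2}\right)$, whence $\|u(T)\|_1\lesssim T^{-1/2}\gamma^{-(1-\theta)/2}\|u_0\|_1^{1/2}+\gamma^{\theta}+\gamma^{-2}T^{-1}\left(\|u_0\|_2^2+\|u_0\|_{m+2}^{m+2}\right)$; optimizing $\gamma=T^{-1/(2+\theta)}$ then gives the exponent $\frac{\theta}{2+\theta}$. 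The parameters to balance are $(\gamma,T)$, not your window length and velocity cutoff $K$: without the $\gamma$-regularization there is nothing against which to balance the defect contribution, and the rate is lost.
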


\no The rate in Theorem~\ref{prop:det} is a new result for $N>2$. We give some details on the existing literature for the \eqref{eq:scl-det} in the next subsection.
\smallskip

We do not make any claim of optimality for the rates obtained in Theorem \ref{main} and Theorem \ref{prop:det}. Moreover, although the rate for the stochastic problem  is better than the one for the deterministic one, it remains an open question how the optimal rates compare.
\smallskip

\no The arguments and estimates leading to Theorem \ref{main} are a special case of a more general new regularity result, which  extends the ones  obtained by Lions, Perthame and Souganidis in \cite{LPS12}.
\begin{thm}\label{thm:reg}
  Assume \eqref{a}, \eqref{b},  \eqref{id}, \eqref{flux} and let $u_0 \in L^{2+m}(\TT^N)$. Then, for all $\l\in (0, \frac{4\t}{2\t+3})$ and $T>0$, there is a $C>0$ such that
$$    \E\int_0^T \|u(t)\|_{W^{\l,1}} dt
    \le C (1+ \|u_{0}\|_{2+m}^{2+m} ), $$
and, for all  $\d>0,$
    $$ \sup _{t \ge \d} \E\|u(t)\|_{W^{\l,1}} < \infty. $$ 
\end{thm}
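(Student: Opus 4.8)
My plan is to argue through the kinetic formulation of \eqref{eq:scl}. Recall that $u$ is a pathwise entropy solution precisely when the kinetic function $\chi=\chi(u(x,t),\xi)=\mathbbm{1}_{0<\xi<u}-\mathbbm{1}_{u<\xi<0}$ satisfies, in the pathwise sense of \cite{LPS13},
$$ d\chi + \sum_{i=1}^N a_i(\xi)\,\partial_{x_i}\chi\circ d\b^i_t = \partial_\xi m \quad\text{on }\TT^N\times\R\times(0,\infty), $$
for a nonnegative bounded kinetic measure $m=m(x,\xi,t)$, with $u(\cdot,t)=\int_\R\chi(\cdot,t,\xi)\,d\xi$. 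Taking the Fourier transform in $x$ and using that in the Stratonovich calculus the integrating factor obeys the ordinary chain rule, I would solve the transport along the Brownian characteristics: writing the phase $\Phi_{s,t}(\xi,k):=\sum_{i}a_i(\xi)k_i(\b^i_t-\b^i_s)$, the Duhamel representation reads
$$ \hat\chi(k,t,\xi) = e^{-i\Phi_{0,t}(\xi,k)}\,\hat\chi(k,0,\xi) + \int_0^t e^{-i\Phi_{s,t}(\xi,k)}\,\partial_\xi \hat m(k,ds,\xi), $$
so that $\hat u(k,t)=\int_\R\hat\chi(k,t,\xi)\,d\xi$ splits into a free part carrying $u_0$ and a source part carrying $m$.

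\textbf{Core averaging estimate (regularization by noise).} The heart of the matter is to show that the $\xi$-average gains fractional derivatives after taking expectations. The key point is that for a Gaussian increment the characteristic function gives, for $s\le t$,
$$ \E\,e^{-i\big(\Phi_{s,t}(\xi,k)-\Phi_{s,t}(\eta,k)\big)} = \exp\Big(-\tfrac{|k|^2(t-s)}{2}\,|{\bf a}(\xi)\s-{\bf a}(\eta)\s|^2\Big), \qquad \s=k/|k|, $$
where ${\bf a}(\xi)\s$ is the componentwise product appearing in \eqref{flux}. Thus the oscillatory cancellation of the deterministic theory is upgraded to genuine Gaussian (parabolic) decay in $|k|^2$, which is the mechanism behind the regularization by noise. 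Estimating, e.g., the free part of $\E|\hat u(k,t)|^2$ by Cauchy--Schwarz in $(\xi,\eta)$ and performing the $\eta$-integral by a layer--cake decomposition, the genuine nonlinearity \eqref{flux} (applied with $z={\bf a}(\eta)\s$) bounds the near--resonance set $\{|{\bf a}(\xi)\s-{\bf a}(\eta)\s|\le r\}$ by $Cr^\t$ and yields a gain of the form $(|k|^2(t-s))^{-\t/2}$. Because the kinetic function is $\{-1,0,1\}$--valued and $\xi$--localized on $[-\|u\|_\infty,\|u\|_\infty]$, I would not stop at the $L^2$ level but interpolate these estimates against the contraction bound $\|u(t)\|_{L^1}\le\|u_0\|_{L^1}$ in the spirit of the $L^1$ averaging lemmata of \cite{LPS12}; this is what allows the smoothness index to reach $\l$ rather than the $L^2$ threshold $\t/2$.

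\textbf{Control of the kinetic measure and the exponent.} For the source part I would first integrate by parts in $\xi$, which transfers $\partial_\xi$ onto the phase and produces the factor $\partial_\xi\Phi_{s,t}=\sum_i A_i''(\xi)k_i(\b^i_t-\b^i_s)$; here the polynomial growth \eqref{a}, $|{\bf A}''(\xi)|\le C(1+|\xi|^m)$, forces the weight $(1+|\xi|^m)$ to appear against $dm$. This is exactly absorbed by the entropy dissipation estimate: testing the kinetic equation with the entropies $\xi\mapsto\xi^2$ and $\xi\mapsto|\xi|^{2+m}$ gives
$$ \int_{\TT^N\times\R\times[0,T]} (1+|\xi|^m)\,dm \;\le\; C\big(1+\|u_0\|_{2+m}^{2+m}\big), $$
which is the source of the right--hand side and of the hypothesis $u_0\in L^{2+m}(\TT^N)$. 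Combining the Gaussian gain, the expectation $\E|\b_t-\b_s|\sim (t-s)^{1/2}$ of the increment, the time integration $\int_0^t(t-s)^{-\t/2}\,ds$, and a frequency cutoff at $|k|\sim R$ balanced against the measure mass is where the final exponent $\l<\frac{4\t}{2\t+3}$ emerges from optimization.

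\textbf{From the time-integrated to the uniform bound, and the main obstacle.} The two assertions then come from the same estimate: integrating the bound in $t\in[0,T]$ gives the first inequality (the weak singularity $(t-s)^{-\t/2}$ is integrable since $\t\le1$), while for the second I would keep $t$ fixed and use that for $t\ge\d$ the Gaussian factor $\exp(-\tfrac{|k|^2\d}{2}|{\bf a}(\xi)\s-{\bf a}(\eta)\s|^2)$ already renders the free part finite uniformly, the source part being handled as above on $[0,t]$. I expect the main obstacle to be the sharp $L^1$--type averaging estimate for the source term: carrying the componentwise genuine nonlinearity \eqref{flux} and the weighted measure bound through the interpolation while tracking the Brownian increments, and optimizing the frequency truncation, so as to reach the exponent $\frac{4\t}{2\t+3}$ rather than the easier $L^2$ value $\t/2$; the uniform--in--time statement and the passage from smooth data to general $u_0\in L^{2+m}$ by approximation are comparatively routine.
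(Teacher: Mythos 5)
Your two main technical ingredients are the right ones and match the paper's: the Gaussian averaging mechanism (expectation of the oscillatory factor giving $\exp(-\tfrac{|k|^2(t-s)}{2}|{\bf a}(\xi)\sigma-{\bf a}(\eta)\sigma|^2)$, combined with \eqref{flux} via a layer-cake argument, which is Lemma~\ref{lem:integral_estimate} in disguise) and the control of the measure term through integration by parts in $\xi$, the weight $(1+|\xi|^m)$ and the entropy dissipation bound (Lemma~\ref{lem:m-bound}). The gap is in how they are assembled. In your two-term Duhamel decomposition (free part plus measure part, with no added smoothing) the measure part has \emph{no} regularity at all: after the integration by parts in $\xi$ its Fourier coefficient carries the raw factor $\sum_i A_i''(\xi)k_i(\beta^i_t-\beta^i_s)$, so the only available bound on $\E|\hat Q(k,t)|$ grows like $|k|$ times the measure mass, and $\sum_k|k|^\lambda\,\E|\hat Q(k,t)|$ diverges for every $\lambda\ge0$. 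A frequency cutoff at $|k|\sim R$ does not repair this: it controls the low frequencies of $Q$, but the high frequencies of $u$ itself are then controlled only by $|\hat u(k,t)|\le\|u_0\|_1$, which does not sum against $|k|^\lambda$. The paper's device is to rewrite the kinetic equation as $\partial_t\chi+\sum_i a^i(\xi)\partial_{x_i}\chi\circ d\beta^i+\gamma B\chi=\gamma B\chi+\partial_\xi m$ with $B=(-\Delta)^\alpha+Id$, so that the Duhamel semigroup has built-in smoothing, $\|D(-\Delta)^{\lambda/2}S^*_{A_\gamma(\xi)}(s,t)\varphi\|_\infty\lesssim(\gamma(t-s))^{-(\lambda+1)/(2\alpha)}\|\varphi\|_\infty$, which makes the measure term finite exactly when $\lambda<3\alpha-1$; the price is a third term $u^1=\int_0^t S_{A_\gamma(\xi)}(s,t)\gamma B\chi(s)\,ds$, which must then be estimated by a second application of the averaging lemma to $\chi(s)$ itself, using adaptedness and independence of increments through conditional expectations. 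Your scheme has no analogue of $u^1$ and hence no mechanism rendering the measure part finite; this is the crux of the proof, not a technicality.

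Second, even with the correct setup, the exponent cannot ``emerge from optimization'' in one pass. Balancing the measure term ($\lambda<3\alpha-1$, improving as $\alpha\uparrow1$) against the $u^1$ term (which gains only $\theta(1-\alpha)$ derivatives, degrading as $\alpha\uparrow1$) gives, with $\chi$ merely in $L^2$, the threshold $\lambda<\frac{2\theta}{\theta+3}$, i.e.\ $1/2$ for $\theta=1$, not $4/5$. The paper reaches $\frac{4\theta}{2\theta+3}$ only through a bootstrap absent from your proposal: from $u\in W^{\lambda_n,1}$ and $|\chi|\le1$ one interpolates to get $\chi\in H^{\lambda_n/2}$, which improves the $u^1$ estimate and yields $\lambda_{n+1}=\frac{2\theta+\frac32\lambda_n}{\theta+3}$, with $\lambda_n\uparrow\frac{4\theta}{2\theta+3}$. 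Note the self-improvement acts precisely on the error term $u^1$ that your decomposition lacks, so there is nothing in it for the iteration to act on. (A minor further point: for the uniform-in-time bound the paper does not prove fixed-time estimates as you suggest; it uses that $t\mapsto\|u(t)\|_{W^{\lambda,1}}$ is non-increasing, by spatial homogeneity and the $L^1$-contraction, and evaluates the time-integrated bound at some $\delta_0\le\delta$.)
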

We note that Theorem \ref{thm:reg} yields higher  regularity than in the corresponding deterministic result. Indeed, Jabin and Perthame proved in \cite{JP02} that, if the genuine nonlinearity condition \eqref{det-flux} holds for  $\t=1$, then, for $t>0$, entropy solutions to \eqref{eq:scl-det} satisfy $u(t)\in W^{\l,1}$ for each $\l\in(0,\frac{1}{3})$. In contrast, Theorem \ref{thm:reg} yields that, for $t>0$,  $u(t)\in W^{\l,1}$ for each $\l\in(0,\frac{4}{5})$. As before, we do not know if the 
the regularity obtained in Theorem \ref{thm:reg} is optimal.
\smallskip

Theorem \ref{thm:reg} also  implies a regularization by noise-type result in the following sense. In analogy to the deterministic theory, we introduce in  Section \ref{sec:regularization} the class of quasi-solutions to the stochastic Burgers equation \begin{equation}\label{eq:stoch_burgers}
\begin{cases}
du+ \partial_{x}u^{2}\circ d\b_{t}= 0 \ \text{ in } \ \TT\times(0,T),\\[1mm]
u=  u_{0} \ \text{ on } \ \TT\times\{0\},
\end{cases}
\end{equation}
and show that they are more regular than the ones to 
\begin{equation}\label{eq:det_scl}
\begin{cases}
\partial_tu+ \partial_{x}u^{2}=  0 \ \text{ in } \ \TT\times(0,T),\\[1mm]
u=  u_{0} \ \text{ on } \ \TT\times\{0\}.
\end{cases}
\end{equation}

The reason we work with such solutions is that the regularity implied by averaging techniques for \eqref{eq:det_scl} is essentially sharp for quasi-solutions, a fact which is not true for entropy solutions; see, for example, De Lellis and Westdickenberg \cite{DLW03} and De Lellis, Otto and Westdickenberg \cite{DLOW03}. Indeed following the proof in \cite{JP02} it easy to conclude that, for $t>0$,  quasi-solutions to \eqref{eq:det_scl} satisfy $u(t)\in W^{\l,1}$ for every $\l\in(0,\frac{1}{3})$; actually \cite{JP02} works with entropy solutions, but a careful look at the proof yields that the result also holds for quasi-solutions. Optimal regularity for quasi-solutions was proven by Golse and Perthame in \cite{GP13}. As it shown in \cite{DLW03}, however, there are quasi-solutions such that $u(t)\not\in W^{\l,1}$ for every $\l>\frac{1}{3}$.
\smallskip

In contrast, we show here that quasi-solutions to  \eqref{eq:stoch_burgers} satisfy $u(t)\in W^{\l,1}$ for every $\l\in(0,\frac{4}{5})$. In this sense, we see that the noise included in \eqref{eq:stoch_burgers} has a regularizing effect. 

\begin{thm}\label{thm:regularization}
Let u be a pathwise quasi-solution to \eqref{eq:stoch_burgers} with $u_{0}\in L^{2}(\TT)$. Then, for all $\l\in(0,\frac{4}{5})$ and $T>0$, there is $C>0$ such that
\[
\E\int_{0}^{T}\|u(t)\|_{W^{\l,1}}dt\le C(1+\|u_{0}\|_{2}^{2}+\E|m|([0,T]\times\TT\times\R)).
\]
and, for all $\d >0,$ 
$$\sup_{t \ge \d}\E\|u(t)\|_{W^{\l,1}}<\infty.$$ 
\end{thm}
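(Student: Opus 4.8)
The plan is to derive the regularity estimate for pathwise quasi-solutions to the stochastic Burgers equation \eqref{eq:stoch_burgers} directly from the general averaging-type result in Theorem~\ref{thm:reg}, by checking that its hypotheses hold in the present one-dimensional setting and that quasi-solutions are covered by the same kinetic machinery. First I would recall the kinetic formulation: a pathwise quasi-solution $u$ to \eqref{eq:stoch_burgers} is, by definition (introduced in Section~\ref{sec:regularization}), associated with its kinetic function $\chi(x,\xi,t)=\mathbbm 1_{0<\xi<u}-\mathbbm 1_{u<\xi<0}$ solving a kinetic equation driven by $\circ\,d\b_t$ with a nonnegative kinetic measure $m$ on $[0,T]\times\TT\times\R$ on the right-hand side. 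The quantity $\E|m|([0,T]\times\TT\times\R)$ appearing in the bound is precisely the total mass of this measure, which is why it enters the estimate just as in Theorem~\ref{thm:reg}. The point is that the regularizing argument behind Theorem~\ref{thm:reg} never uses the entropy (sign) condition on $m$, only its finite mass, so it applies verbatim to quasi-solutions.

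Second I would verify the nonlinearity hypothesis \eqref{flux} for the Burgers flux. Here $N=1$ and $A(\xi)=\xi^2$, so ${\bf a}(\xi)=2\xi$; for any $\sigma\in S^0=\{\pm1\}$ and $z\in\R$ one has
\[
|\{\xi\in\R:\ |2\xi\sigma-z|\le\ve\}|=|\{\xi:\ |\xi-\tfrac{z\sigma}{2}|\le\tfrac{\ve}{2}\}|=\ve,
\]
so \eqref{flux} holds with $\theta=1$ and $C=1$. Consequently the admissible regularity range in Theorem~\ref{thm:reg} is $\lambda\in(0,\tfrac{4\theta}{2\theta+3})=(0,\tfrac45)$, which matches exactly the range asserted here. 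Since $u_0\in L^2(\TT)$ and the growth exponent of the Burgers flux is $m=0$ (indeed $A''(\xi)=2$ is bounded, so \eqref{a} holds with $m=0$), the norm $\|u_0\|_{2+m}^{2+m}$ reduces to $\|u_0\|_2^2$, matching the right-hand side of the claimed estimate.

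Third, with these identifications in place, I would invoke the conclusion of Theorem~\ref{thm:reg} (in the quasi-solution version), which gives
\[
\E\int_0^T\|u(t)\|_{W^{\lambda,1}}\,dt\le C\big(1+\|u_0\|_2^2+\E|m|([0,T]\times\TT\times\R)\big)
\]
and the supremum bound $\sup_{t\ge\delta}\E\|u(t)\|_{W^{\lambda,1}}<\infty$ for each $\delta>0$. The main obstacle, and the step that needs genuine care rather than mere specialization, is justifying that the proof of Theorem~\ref{thm:reg} really does transfer to quasi-solutions: one must confirm that the a~priori $L^2$ bound (the kinetic energy balance) holds with the measure term $\E|m|([0,T]\times\TT\times\R)$ replacing the entropy-dissipation term, and that the averaging/interpolation argument only exploits the finiteness of this mass together with the nonlinearity estimate \eqref{flux}, never the sign of $m$. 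Once this structural check is made, the estimate follows by direct substitution of $N=1$, $A(\xi)=\xi^2$, $m=0$, and $\theta=1$ into the general bound.
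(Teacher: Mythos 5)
Your plan follows essentially the same route as the paper: specialize the machinery of Theorem~\ref{thm:reg} to $A(u)=u^{2}$, for which \eqref{flux} holds with $\t=1$ (giving precisely the range $\l\in(0,\frac{4}{5})$) and the growth exponent in \eqref{a} is zero (so $\|u_{0}\|_{2+m}^{2+m}=\|u_{0}\|_{2}^{2}$), and observe that the only place where the entropy structure enters is the bound on the kinetic measure. The paper does exactly this: it notes that the derivation of \eqref{eq:splitup} in Appendix~\ref{app:splitup} never uses nonnegativity of $m$, and it replaces the use of Lemma~\ref{lem:m-bound} in the estimate of $Q$ by
\begin{equation*}
\E\int_{0}^{T}\langle(-\D)^{\frac{\l}{2}}Q(t),\vp(t)\rangle\, dt\ \lesssim\ \|\vp\|_{\infty}\,\g^{-\frac{3}{2}}\,\E|m|([0,T]\times\TT\times\R),
\end{equation*}
which is the structural check you isolate in your final paragraph.

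One correction: in your first paragraph you describe the kinetic measure of a pathwise quasi-solution as nonnegative. By Definition~\ref{def:path_e-soln-1} it is a \emph{signed} kinetic Radon measure, and this is the crux of the matter: nonnegativity is exactly what the proof of Lemma~\ref{lem:m-bound} requires, so for quasi-solutions that lemma is unavailable and the total variation $\E|m|([0,T]\times\TT\times\R)$ must appear as an independent term on the right-hand side rather than being absorbed into $\|u_{0}\|_{2}^{2}$. Your later statements (``never the sign of $m$'', keeping $\E|m|$ in the bound) treat the signed case correctly, so this is an inconsistency of exposition rather than a gap in the argument; but as written, the first paragraph contradicts the definition and obscures why the theorem's estimate has the form it does.
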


We expect, and this is the  subject of a subsequent work, that methods similar to the ones developed in this paper, in combination with arguments from \cite{DV13}, may be used to prove the existence and uniqueness of an invariant measure in the case of additive noise, that is for
$$
du+ \sum_{i=1}^{N}\partial_{x_{i}}A^{i}(u)\circ d\b_{t}^{i}= dW_t,
$$
where $W$ is an infinite dimensional Wiener process independent of $\b$. 

\subsection{Brief review of the existing literature}
\noindent We discuss next briefly results about  scalar conservation laws with some type of random dependence and their long-time behavior, we recall the basic literature for  averaging lemmata and touch upon the issue of regularization by noise.
\medskip

\noindent The effect of noise entering scalar conservation laws via randomness in the initial condition has been studied, for example, by Avellaneda and E \cite{AE95}, Burgers \cite{B74}, Ryan\cite{R98} and Sinai\cite{S92-2}.
\smallskip

For stochastic scalar conservation laws driven by additive noise, also including boundary value problems, we refer to the works by E, Khanin, Mazel and Sinai \cite{EKMS00}, Kim \cite{K03}, Nakazawa \cite{N82}, Saussereau and Stoica \cite{SS12}, Vallet and  Wittbold \cite{VW09} and the references therein. 
\smallskip

The case of multiplicative semilinear noise dependence in It\^o's form, that is stochastic PDE (SPDE for short) of the form
\begin{equation}
du+\div A(u)dt=g(u)dW_{t},\label{eq:semilinear_SSCL}
\end{equation}
where $W$ denotes a possibly infinite dimensional Wiener process, has attracted considerable interest in recent years;  see for example, Bauzet, Vallet and Wittbold \cite{BVW13}, Chen, Ding and Karlsen,  \cite{CDK12}. Debussche and Vovelle \cite{DV10},  Debussche,  Hofmanova and Vovelle \cite{DHV13},  Feng and Nualart\cite{FN08}, Hofmanova \cite{H13}, Holden and Risebro\cite{HR97}, etc.. 
\smallskip

The long-time behavior of solutions to \eqref{eq:semilinear_SSCL}, the  existence and uniqueness of an invariant measure and  a singleton random point attractor was first studied in \cite{EKMS00} for the stochastic Burgers equation with additive noise, that is 
\begin{equation}
du+\partial_{x}u^{2}dt=dW_{t},\label{eq:semilinear_SSCL-1}
\end{equation}
on the one-dimensional torus $\TT=[0,2\pi]$;  for extensions of these results see Bakhtin\cite{B13-2}, Boritchev \cite{B13-3}, Iturriaga and Khanin \cite{IK03}, Saussereau and Stoica \cite{SS12}, etc..
\smallskip

More recently, these results were  extended by Debussche and Vovelle \cite{DV13} to general SSCL on $\TT^{N}$ with additive noise, that is to 
\begin{equation}
du+\div A(u)dt=\Phi dW_{t},\label{eq:semilinear_SSCL-additive}
\end{equation}
where $\Phi$ is an appropriate Hilbert-Schmidt operator. In addition, it was  shown in \cite{DV13} that, under appropriate non-degeneracy conditions on the flux $\bf A$, \eqref{eq:semilinear_SSCL-additive} has a unique invariant measure. 
\medskip

We also refer to Dirr and Souganidis \cite{DS05}, who studied similar questions for Hamilton-Jacobi equations perturbed by additive noise in any dimension.
\medskip

\noindent All of the above mentioned works consider semilinear SSCL in the sense that the noise is applied to functions of the solution and not its derivatives. 
\smallskip

In contrast, \cite{LPS13} and its subsequent extensions by Lions, Perthame and Souganidis \cite{LPS14} and the authors \cite{GS14} consider SSCL like
\[
du+\sum_{i=1}^{N}\partial_{x_i}A_{i}(x,u)\circ d\b_{t}^{i}=0,
\]
with inhomogeneous spatially  dependent fluxes and single and multiple rough time dependence respectively, in the sense that the noise is applied to the flux and, hence,  the derivatives of the solution.

\medskip
The  well-posedness of solutions to such SPDE was proven in \cite{LPS12,LPS13,GS14} using a kinetic formulation. The case of joint transport noise and linear multiplicative noise has been treated by Friz and  Gess in \cite{FG14} by entropy and rough paths methods.
\smallskip

We next discuss some of the available literature about the  long-time behavior of deterministic scalar conservation laws (SCL) with periodic initial data. Since there are many references, here we restrict to the ones  that seem most relevant for our purpose. The one-dimensional setting is well understood, since Lax in \cite{L57} proved, using the Lax-Oleinik formula, asymptotic linear decay for strictly convex or concave fluxes. The first asymptotic result with a rate for  $N=2$ is due to Engquist and  E in \cite{EE93}. The higher dimensional case  was first studied  by Chen and  Frid in \cite{CF99} and was subsequently generalized by Chen and Perthame \cite{CP09} and Debussche and Vovelle \cite{DV09}. These last three references assumed genuine non-linearity-type conditions on the flux and proved the  convergence of the solutions to their spatial average without any rate. More recently and  independently, Dafermos \cite{D13} and Panov \cite{P13} proved the same result under weaker genuine non-linearity conditions, which are also necessary. We note that with the exception of $N=1,2$ no rate was known before.
\smallskip

Many of the above mentioned results about the long-time behavior of deterministic SCL rely on averaging lemmata of the type introduced by Golse, Lions, Perthame, Sentis \cite{GLPS88}. Averaging lemmata are one of the most important tools in the theory of (deterministic) conservation laws and were used very effectively for conservation laws by, among others, Perthame and Tadmor \cite{PT91}, Lions, Perthame and Tadmor \cite{LPT94}, Lions, Perthame and Souganidis \cite{LPS96}, Perthame and Souganidis \cite{PS98} etc.. Some stochastic averaging lemmata, which lead to higher regularizing results than in the corresponding deterministic case, were obtained by  Lions, Perthame and Souganidis \cite{LPS12}.  
\smallskip

Regularization by noise phenomena  have been observed in the linear spatially inhomogeneous case, that is for transport equations of the form
\begin{equation}
du+b(x)\cdot Du\ dt=0.\label{eq:det_transp}
\end{equation}
Indeed, it has been shown by Flandoli, Gubinelli and Priola \cite{FGP10} that adding perturbations of the type
\begin{equation}
du+b(x)\cdot Du\ dt=\partial_{x}u\circ d\b_{t},\label{eq:stoch_trans}
\end{equation}
regularizes the dynamics in the sense that well-posedness can be obtained for \eqref{eq:stoch_trans} in cases for which \eqref{eq:det_transp} is ill-posed. Moreover, it has been proven (see, for example, Fedrizzi and Flandoli \cite{FF13}, Flandoli, Gubinelli and Priola \cite{FGP13} and Mohammed, Nilssen and Proske \cite{MNP13}), that solutions to \eqref{eq:stoch_trans} are more regular  than their deterministic analogues. For example, \cite{FF13} shows that, if  $u_{0}\in\bigcap_{\l\ge1}W^{\l,1}$ and $b\in L^{p}(\R)$ with $p\ge2$,  then $u(t)\in\bigcap_{\l\ge1}W_{loc}^{\l,1}$ a.s. and for every $t\in[0,T]$. 
\smallskip

All the above results and techniques depend strongly on the linear structure of \eqref{eq:stoch_trans}. In fact, Flandoli  \cite{F11} showed that analogous regularizing effects are wrong in the nonlinear case, for example, the stochastic Burgers' equation
\begin{align*}
du+
\partial_{x}u^{2}dt= &  \partial_{x}u\circ d\b_{t},
\end{align*}
where the inclusion of noise does not prevent characteristics to collide and thus BV-regularity is the best one can get even  in the stochastic case.

\subsection{Organization of the paper and some notation}
The paper is organized as follows: In Section~\ref{sec2} we recall the notion of stochastic pathwise entropy as well as some of the basic estimates that we need to prove the main results. Theorem~\ref{main}, Theorem~\ref{prop:det} and Theorem~\ref{thm:reg} are proved respectively  in Section~\ref{sec3}, Section~\ref{sec:det_case} and Section~\ref{sec4}. In  Section~\ref{sec:regularization}  we recall the notion of quasi-solutions  to \eqref{eq:det_scl}, introduce the definition of pathwise quasi-solutions to \eqref{eq:stoch_burgers} and prove Theorem~\ref{thm:regularization}.  Appendix~\ref{app:RDS} states some of the basic properties of RDS, in  Appendix~\ref{lemma} we present the proof of a very useful and basic analysis lemma which  in the proofs, and finally, in Appendix~\ref{app:splitup} we give the rigorous justification of a formula which is at the core of the proofs. 
\smallskip

In what follows,  we will say pathwise instead of stochastic pathwise entropy solution, we will omit, when it does not cause any confusion, the dependence in $\omega$ and we occasionally write 
$m(x,\xi,s)dx d\xi ds$ instead of $dm(x,\xi,s).$  For notational convenience, we write $A \lesssim B$  if $A \leq CB \ \hbox{ for some } C>0.$  If $A \lesssim B$ and $B \lesssim A$, we write $A \sim B$. We work with the homogeneous Bessel potential spaces $W^{\l,p}$ for $\l > 0$, $p\in [1,\infty)$, that is 
  $$W^{\l,p}: = \{f\in L^p(\TT^N):\ (|n|^\l \hat f(n))^\vee \in L^p(\TT^N)\},$$
where $\hat f$  denotes  the discrete Fourier transform of $f$ on $\TT^N$ and $f^\vee$ its inverse. We note that the homogeneous Bessel potential spaces coincide with the domains of the fractional Laplace operators $(-\D)^\frac{\l}{2}$ on $L^p(\TT^N)$. For notational simplicity we also set $H^\l := W^{\l,2}$.

\section{Pathwise entropy solutions and some basic properties}\label{sec2}

\subsection{The definition}\label{sec:defn_pathwise_entropy}
We begin with the derivation of the notion of the pathwise solution to \eqref{eq:scl} and, for the moment,  we assume that $\boldsymbol \b$ is smooth in which case $\circ$ reduces to multiplication. Then \eqref{eq:scl} is a standard scalar conservation law with time dependent flux which is best studied using kinetic solutions. 
\smallskip 

For the latter we introduce the auxiliary nonlinear  function
\begin{equation}
\chi(x,\xi,t)=\chi(u(x,t),\xi)=\left\{ \begin{array}{l}
+1 \ \text{ if } \ 0\leq\xi\leq u(x,t),\\[1.5mm]
-1 \ \text{ if } \ u(x,t)\leq\xi\leq0,\\[1.5mm]
\;0 \ \text{ otherwise}.
\end{array}\right.\label{eq:char_fctn}
\end{equation}
The kinetic formulation  of \eqref{eq:scl} is the assertion that the $\chi$ given by \eqref{eq:char_fctn} is a solution, in the sense of distributions, to 
\begin{equation}\label{kinetic form-1}
\begin{cases}
\partial_{t}\chi+\sum_{i=1}^{N} a^{i}(\xi)\partial_{x_{i}}\chi\dot{\b}^{i}(t) =\partial_\xi m \ \text{ in } \ \TT^{N}\times\R\times(0,T),\\[1.5mm]
\chi  =\chi_0(\cdot,\cdot):=\chi(u_{0}(\cdot),\cdot) \ \text{ on }\TT^{N}\times\R\times\{0\},
\end{cases}
\end{equation}
where 
$$m \ \text{ is a nonnegative bounded measure on $\TT^{N} \times \R\times (0,T)$ for each $T>0$.}$$

\smallskip

To introduce the pathwise entropy solutions we use the transport equation
\begin{equation}\label{eq:transport_1}
\begin{cases}
\partial_t\vr+\sum_{i=1}^{N}a^{i}(\xi)\partial_{x_{i}}\vr\dot{\b}^{i}(t)  =0 \  \text{ in \ $\TT^N\times \R\times (0,\infty)$}, \\[1mm]
\vr  =\vr_{0}  \ \text{ on }\TT^{N}\times\R\times\{0\}.
\end{cases}
\end{equation}

For each $y \in \TT^N$ and $\rho_0 \in C^\infty(\TT^N)$, the solution   $\vr=\vr(x,y,\xi,t)$  to \eqref{eq:transport_1} with initial condition $\vr_{0}(\cdot-y)$ is given by 
\begin{equation}\label{takis10000}
\vr(x,y,\xi,t)=\vr_0\left(x-y-\sum_{i=1}^{N}a^{i}(\xi){\b}^{i}(t)\right).
\end{equation}
We define the ``convolution along characteristics'' $\vr\ast\chi:\TT^N \times \R \times [0,\infty) \to \R$ by
\[
\vr\ast\chi(y,\xi,t):=\int_{\TT^N}\vr(x,y,\xi, t)\chi(x,\xi,t)dx.
\]
It follows that,   in the sense of distributions in $\xi$ and $t$,
\begin{equation}\label{takis1} 
\partial_{t}(\vr\ast\chi)(y,\xi,t)=\int_{\TT^N} \vr(x,y,\xi,t)\partial_{\xi} m(x,\xi, t)dx\ \ \text{in }\TT^{N}\times\R\times(0,T),
\end{equation}
and, after integrating in $t$, again in the the sense of distributions in $\xi$ and for all $t\in [0,T]$,
\begin{align*}
(\vr\ast\chi)(y,\xi,t) - (\vr\ast\xi)(y,\xi,0) =  \int_{0}^{t}\int_{\TT^N}\vr(x,y,\xi,r)\partial_{\xi}m(x,\xi,r)dxdr.
\end{align*}
Note that this last identity is actually equivalent to the kinetic formulation for conservation laws with  smooth time dependence. Moreover, the dependence on the derivatives of the paths has disappeared. These two observations  are the  idea behind the notion of pathwise entropy solutions, that is, to use  \eqref{takis1} as the definition of a solution.  To state the latter we need to introduce one more notion.
\smallskip

We say that a measurable map $m: \Omega \to \mcM$, the space of nonnegative bounded measures on $\TT^N\times\R\times[0,T]$, is a kinetic measure, if the process $t\mapsto m(\cdot, [0,t])$  with values in the space of nonnegative bounded measures on $ \TT^N \times \R $ is $\mcF_t$-adapted.

\begin{defn}
\label{def:path_e-soln} A map $u:\TT^N \times [0,\infty) \times \Omega \to \R$  such that, for all $T>0$,  $u\in  (L^{1}\cap L^\infty)(\TT^N \times [0,T]\times \Omega)$, and $\P$-a.s. in $\omega$, $u(\cdot, \omega)  \in C \big([0,T]; L^1(\TT^N)\big)\cap L^\infty (\TT^N \times [0,T])$, and $t\mapsto u(t, \cdot )$ is an $\mcF_{t}$-adapted  process in $L^1(\TT^N)$, is a (stochastic) pathwise entropy solution to \eqref{eq:scl}, if there exists a kinetic measure $m$ such that, for all $y\in\TT^N$ and all test functions $\vr$ given by \eqref{takis10000} with $\vr_{0}\in C^{\infty}(\TT^N)$ and all  $\vp\in C_{c}^{\infty}([0,T)),\psi\in C_c^\infty(\R)$, 
\begin{align*}
 & \int_{0}^{T}\int_{\R}\partial_{t}\vp(r)\psi(\xi)(\vr\ast\chi)(y,\xi,r)drd\xi+\int_{\R}\vp(0)\psi(\xi)(\vr\ast\chi)(y,\xi,0)d\xi\\
 & =\int_{0}^{T}\int_{{\TT^N}\times \R}\vp(r)\partial_{\xi}\left(\psi(\xi)\vr(x,y,\xi,r)\right)dm(x,\xi,r).
\end{align*}
\end{defn}

\subsection{The basic properties}
The next proposition summarizes the key properties of the pathwise entropy solution and is obtained by a straightforward modification of the proof of the analogous result 
in   \cite{LPS13}.
\begin{prop}
\label{prop:well-posedness}  Assume \eqref{a}, \eqref{b} and   \eqref{id}.  For each $u_{0}\in L^{\infty}(\TT^N)$ and each path $t\mapsto {\boldsymbol\b}_{t}(\o)$, there exists a unique stochastic pathwise entropy solution $u=u(\cdot, \omega; u_{0})$. 
Moreover, for all $t\geq 0$, $p\in[1,\infty]$ and  $\P$-a.s., 
\begin{align}
\int u(x,t,\omega;u_{0})dx&=\int u_{0}(x)dx, \label{eq:mean-preservation} \\
\|u(\cdot, t, \omega; u_{0})\|_{p}&\le\|u_{0}\|_{p},\label{eq:infty-bound} 
\end{align}
and
\begin{equation}
\|u(\cdot, t, \omega; u_{0})\|_{BV}\le\|u_{0}\|_{BV}.\label{eq:BV-bound},
\end{equation}
and, if $u^{1}(\cdot, \cdot;\omega), u^{2}(\cdot, \cdot;\omega)$ are two pathwise  entropy solutions, then, for all $t, s \geq 0$ with $s\leq t$ and  $\P$-a.s., 
\begin{equation}
\|u^{1}(\cdot, t, \omega)-u^{2}(\cdot,t,\omega)\|_{L^{1}(\TT^N)}\le\|u^{1}(\cdot,s, \omega)-u^{2}(\cdot,s,\omega) \|_{L^{1}(\TT^N)}.\label{eq:pathwise_contraction}
\end{equation}
\end{prop}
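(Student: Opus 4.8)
The plan is to obtain all of the stated properties by adapting the well-posedness theory developed in \cite{LPS13} for $\R^N$ to the periodic setting. As the text indicates, the torus introduces no essential new difficulty; if anything, the boundedness of the domain simplifies the compactness arguments, while the spatial homogeneity of the flux is preserved under periodicity, so the entire kinetic/convolution-along-characteristics machinery transcribes directly.

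First I would establish existence. Approximating $\boldsymbol\b$ by smooth paths $\boldsymbol\b^n \to \boldsymbol\b$ locally uniformly, equation \eqref{eq:scl} with $\boldsymbol\b^n$ in place of $\boldsymbol\b$ is a classical scalar conservation law with smooth time-dependent flux, for which a unique kinetic (entropy) solution $u^n$ exists and satisfies \eqref{kinetic form-1} with a nonnegative kinetic measure $m^n$. The maximum principle and standard kinetic estimates give uniform bounds $\|u^n(t)\|_p \le \|u_0\|_p$ together with uniform mass for the $m^n$; one then passes to the limit, using the convolution-along-characteristics identity \eqref{takis1}, to produce a limit $u$ and a kinetic measure $m$ satisfying Definition \ref{def:path_e-soln}. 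The $L^p$ bounds \eqref{eq:infty-bound} pass to the limit directly, and the mean-preservation identity \eqref{eq:mean-preservation} follows by integrating the kinetic equation in $(x,\xi)$ and observing that on $\TT^N$ the flux terms, being spatial divergences, integrate to zero with no boundary contribution.

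The core of the proposition is the $L^1$-contraction \eqref{eq:pathwise_contraction}, from which uniqueness is immediate. Here I would follow the comparison argument of \cite{LPS13}: given two solutions $u^1, u^2$ with kinetic measures $m^1, m^2$, one subtracts the two formulations tested against a suitable regularization and carries out a doubling-of-variables argument in the $(x,\xi)$ variables. The decisive point is that, because the path dependence enters only through the characteristic shift $x \mapsto x - \sum_i a^i(\xi)\b^i(t)$ in the test densities $\vr$ of \eqref{takis10000}, the terms that would involve the (nonexistent) time derivatives of $\boldsymbol\b$ cancel exactly, so the rough time dependence drops out and one is left with precisely the dissipation structure of the deterministic case. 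The nonnegativity of $m^1, m^2$ then yields the contraction once the regularization parameters are sent to zero. This cancellation is the main obstacle: all the genuinely stochastic content of the proof lies in verifying that the rough-path contributions vanish under the convolution along characteristics, and everything else is routine once the $\R^N$ argument is in place.

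Finally, the BV bound \eqref{eq:BV-bound} follows from \eqref{eq:pathwise_contraction} together with the translation invariance of the problem: since the flux is spatially homogeneous, for each $h \in \R^N$ the translate $u(\cdot + h, t)$ solves \eqref{eq:scl} with datum $u_0(\cdot + h)$, so \eqref{eq:pathwise_contraction} gives $\|u(\cdot + h, t) - u(\cdot, t)\|_1 \le \|u_0(\cdot + h) - u_0(\cdot)\|_1$; dividing by $|h|$ and passing to the supremum over $h$ recovers \eqref{eq:BV-bound}.
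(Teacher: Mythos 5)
Your proposal is correct and follows essentially the same route as the paper, which proves this proposition simply by invoking a straightforward modification of the well-posedness theory of \cite{LPS13} to the periodic setting: existence via stability along smooth approximations of the path, the $L^1$-contraction via the kinetic/convolution-along-characteristics comparison argument in which the rough-in-time terms cancel, mean preservation by integrating the kinetic equation over $\TT^N\times\R$, and the $BV$ bound from translation invariance plus the contraction. Nothing in your sketch deviates from or adds a gap relative to the paper's (cited) argument.
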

\smallskip

Note that  Proposition \ref{prop:well-posedness} and, in particular, \eqref{eq:pathwise_contraction}, yield that the solutions $u=u(\cdot, \omega; u_{0})$ are well-defined for all $u_0\in L^1(\TT^N)$. It  is then easy to see that the map  defined in \eqref{takis0} is indeed a continuous RDS on $L^{1}(\TT^{N})$. 
\smallskip

For future  use we remark that, in view the definition of $\chi$, for each $u\in \R$,
 it holds $\int_\R |\chi(u,\xi)|d\xi =  |u|$ and, thus, for any $u\in L^1(\TT^N)$ and $p\ge1$, we have
\begin{align*}
  \|\chi(u(\cdot),\cdot)\|_{L^p(\TT^N\times\R)}^p &\le \|\chi(u(\cdot),\cdot)\|_{L^1(\TT^N\times\R)} = \int_{{\TT^N}\times \R} |\chi(u(x),\xi)|d\xi dx\\&=\int_{\TT^N} |u(x)|dx=\|u\|_{L^1(\TT^N)}.
\end{align*}

In the sequel we will also need the following bound on the mass of the entropy defect measure $m$, which a small extension of the analogous bounds in  \cite{LPS13, LPS14, GS14}, where we refer to for the proofs. 
\begin{lem}
\label{lem:m-bound}  Assume \eqref{a}, \eqref{b} and   \eqref{id}.  For $u_{0}\in L^\infty(\TT^N)$, $m\in \N$, $t\geq 0$ and $\P$-a.s., 
\begin{align*}
\|u(\cdot, t,\omega, u_0)\|_{m+2}^{m+2} & + (m+2)(m+1)\int_{0}^{t}\int_{{\TT^N}\times \R} |\xi|^{m} dm(x,\xi,s)=\|u_0\|_{m+2}^{m+2},
\end{align*}
and  
\begin{align*}
 & (m+2)(m+1)\E\int_0^\infty \int_{{\TT^N}\times \R} |\xi|^{m} dm(x,\xi,s)\le\|u_0\|_{m+2}^{m+2}.
\end{align*}
\end{lem}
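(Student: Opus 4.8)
The plan is to run an entropy balance for the single entropy $S(\xi)=|\xi|^{m+2}$, whose second derivative is exactly $S''(\xi)=(m+2)(m+1)|\xi|^{m}$, while exploiting the fact that the flux (transport) contribution disappears once one integrates over the torus. Concretely, I would test the defining identity of Definition~\ref{def:path_e-soln} with the constant profile $\vr_{0}\equiv1$. Since the constant function belongs to $C^{\infty}(\TT^{N})$, the associated test function \eqref{takis10000} is $\vr\equiv1$, independent of $x,y,t$; the characteristics, and with them all dependence on $\boldsymbol\b$, are absorbed and one is left with $\vr\ast\chi(y,\xi,t)=\int_{\TT^{N}}\chi(x,\xi,t)\,dx=:\bar\chi(\xi,t)$. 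With this choice, and using $\partial_{\xi}(\psi\vr)=\psi'$, the identity collapses to the purely $(t,\xi)$ statement that, in the sense of distributions in time,
$$\partial_{t}\int_{\R}\psi(\xi)\bar\chi(\xi,t)\,d\xi=-\int_{\TT^{N}\times\R}\psi'(\xi)\,dm(\cdot,\cdot,t),$$
with initial datum $\int_{\R}\psi(\xi)\bar\chi(\xi,0)\,d\xi=\int_{\TT^{N}}\int_{\R}\psi(\xi)\chi(u_{0}(x),\xi)\,d\xi\,dx$.

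Second, I would like to insert $\psi=S'$, where $S'(\xi)=(m+2)|\xi|^{m+1}\operatorname{sgn}(\xi)$, so that, using $S(0)=0$ and the elementary macroscopic identity $\int_{\R}S'(\xi)\chi(u,\xi)\,d\xi=S(u)$, the left side becomes $\int_{\TT^{N}}S(u(x,t))\,dx=\|u(\cdot,t)\|_{m+2}^{m+2}$ and the right side becomes $(m+2)(m+1)\int_{\TT^{N}\times\R}|\xi|^{m}\,dm$. Since the definition admits only $\psi\in C_{c}^{\infty}(\R)$ while $S'$ grows polynomially, this requires an approximation. Here I would use the $L^{\infty}$ bound \eqref{eq:infty-bound}, which confines the support of $\chi(u(\cdot,t),\cdot)$ to $\{|\xi|\le\|u_{0}\|_{\infty}\}$, together with the fact that the kinetic measure $m$ is likewise supported in $\{|\xi|\le\|u_{0}\|_{\infty}\}$: choosing $\psi_{k}\to S'$ in $C^{1}$ on $[-R,R]$ with $R>\|u_{0}\|_{\infty}$ and $\psi_{k}$ compactly supported, all $\xi$-integrals take place over this fixed compact set, so dominated convergence passes the limit and introduces no error. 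Integrating the resulting distributional identity in time over $[0,t]$, which is legitimate since $t\mapsto u(\cdot,t)$ is $L^{1}$-continuous and $t\mapsto m(\cdot,[0,t])$ is monotone, yields the first, $\P$-a.s.\ identity after rearrangement.

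Finally, the second bound follows by monotonicity: dropping the nonnegative term $\|u(\cdot,t)\|_{m+2}^{m+2}\ge0$ from the identity gives $(m+2)(m+1)\int_{0}^{t}\int_{\TT^{N}\times\R}|\xi|^{m}\,dm\le\|u_{0}\|_{m+2}^{m+2}$ $\P$-a.s.; letting $t\to\infty$ by monotone convergence (as $m\ge0$) and then taking expectations, which is immediate because the right side is deterministic, gives the claimed estimate. I expect the only genuine technical point to be the truncation step, namely verifying that $m$ is supported in $\{|\xi|\le\|u_{0}\|_{\infty}\}$ so that replacing the compactly supported $\psi$ by the polynomially growing $S'$ costs nothing. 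This is precisely where the $L^{\infty}$ theory of \cite{LPS13} enters and is the reason the statement is a ``small extension'' of the bounds established there.
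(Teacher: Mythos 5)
Your proof is correct. The paper does not actually contain a proof of this lemma---it defers to \cite{LPS13,LPS14,GS14}---but your argument (testing the pathwise formulation with the constant profile $\rho_{0}\equiv1$, so that the rough transport term disappears and only the $(t,\xi)$-balance survives, then inserting $S(\xi)=|\xi|^{m+2}$ via compactly supported truncations justified by the $L^{\infty}$ bound \eqref{eq:infty-bound} and the resulting compact $\xi$-support of $\chi$ and $m$) is precisely the standard kinetic entropy balance used in those references, so it matches the intended proof; note also that the one technical point you flag, the $\xi$-support of $m$, follows from the same device, since testing with $\rho_{0}\equiv1$ and $\psi$ supported in $\{|\xi|>\|u_{0}\|_{\infty}\}$ shows the projection of $m$ is $\xi$-translation invariant there, hence zero by finiteness.
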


\section{The quantitative asymptotic behavior.}\label{sec3}

The key step here is to introduce a regularization in the spirit of the classical averaging lemmata, which allows to obtain estimates that control the long time behavior of the solution to \eqref{eq:scl}. The typical  proofs of such averaging lemmata employ space time Fourier transforms, which is not possible in our context  because of the rough time dependence of the flux. Instead, here we 
use only Fourier transforms with respect to the space variable $x$, a technique developed in Bouchut and Desvillettes \cite{BD99} and used in Debussche and Vovelle \cite{DV13} as well as in  \cite{LPS12}. Although we follow the arguments of \cite{DV13}, here we need to deal with the new difficulties arising because of the stochastic nature of the flux in \eqref{eq:scl}. An important tool  is a technical result (Lemma~\ref{lem:integral_estimate}), which uses the genuine nonlinearity condition \eqref{flux}. Since the proof is rather long, we divide it in several subsections.

\subsection{Averaging lemmata, a regularization result and a ``new formula'' for the solution} 
In view of  \eqref{eq:pathwise_contraction} and the density of $L^\infty(\TT^N)$ in $L^1(\TT^N)$, it is clear that it suffices to consider $u_0 \in L^\infty(\TT^N)$.  

\smallskip
Moreover, 
to keep the presentation simple, throughout this section we restrict the presentation to the zero average case, that is, henceforth, we assume that  
\begin{equation}\label{takis2}
\int u_{0}(x)dx=0
\end{equation}
and leave it up to the reader to fill in the details for the general case, which is easily reduced to \eqref{takis2}.
\vskip.075in

The main idea is that averaging over the ``kinetic variable'' $\xi$ of the solution $\chi$  to \eqref{eq:transport_1} yields more regularity and provides new estimates
for the solutions to the conservation laws. The difficulty, of course, comes from the very low regularity of the right hand side $\partial_\xi m$ in \eqref{kinetic form-1}. Typically this problem is dealt with by adding a regularizing (parabolic)-type term in the kinetic equation and taking advantage of the smoothing it generates.
\smallskip

In this paper we use the fractional Laplacian operator as a regularizing term (see \cite{BD99} and \cite{DV13} for similar types of arguments)
 \begin{equation}\label{takis3}
 B:=(-\D)^{\alpha}+Id \ \text{ with} \ \a\in(0,1],
 \end{equation}
and, for $\gamma >0$, we   rewrite \eqref{kinetic form-1} as 
\begin{equation}\label{eq:kinetic_form-1-1}
\begin{cases}
\partial_t\chi+\sum_{i}a^{i}(\xi)\partial_{x_{i}}\chi \circ d{\b}^{i}(t)+\gamma B\chi =\gamma B\chi+\partial_\xi m  \ \text{ in } \  \TT^{N}\times\R\times(0,T),
\\[1.4mm]
\chi  =\chi(u_{0}(\cdot),\cdot)\text{ on }\TT^{N}\times\R\times\{0\}.
\end{cases}
\end{equation}

Note that in what follows we make strong use of the properties of the Gaussian paths and our approach and results do not extend to  general continuous driving signals replacing $\boldsymbol{\b}$.
\smallskip

Let $S_{A_{\gamma}(\xi)}(s,t)$ denote the solution operator (group) of 
\begin{equation}\label{eq:semigroup-eqn}
\partial_t v + \sum_{i}a^{i}(\xi)\dot{\b}^{i}(t)\partial_{x_{i}}v +\gamma Bv = 0 \ \text{in} \ \TT^N\times \R \times (s,\infty).
\end{equation}
It is immediate that, for all $f$ in the appropriate function space,  
\begin{equation}\label{eq:semigroup}
  S_{A_{\gamma}(\xi)}(s,t)f(x)= (S_{\g B}(t-s)f)\left(x-\mathbf{a}(\xi)(\boldsymbol \b(t)-\boldsymbol \b(s))\right),
\end{equation}
where $S_{\g B}(t)$ is the solution semigroup to 
\begin{equation}\label{takis101}
\partial_t v + \gamma B v=0 \  \text{ in} \ \R^N\times (0,\infty),
\end{equation}
and 
$$ \mathbf{a}(\xi)(\boldsymbol \b(t)-\boldsymbol \b(s))= \big(a^1(\xi)(\beta^1(t)-\beta^1(s)), \dots, a^N(\xi)(\beta^N(t)-\beta^N(s))\big).$$
Note that, for each $n\in\Z^N$, the Fourier transform  of  $S_{A_{\gamma}(\xi)}$ corresponds to multiplication by $$\exp({-i {\bf{a}(\xi)} (\boldsymbol{\beta}(t)-\boldsymbol {\beta}(s)) \cdot n
-\gamma (|n|^{2\alpha} + 1)(t-s))}.$$

It follows (see Appendix \ref{app:splitup} for a rigorous justification)  from the variation of parameters formula that the solution $\chi$ to \eqref{takis3} is given, in the sense of distributions in $x$ and $\xi$,  by
\begin{equation}\label{vc100}
\chi(x,\xi,t)=  S_{A_{\gamma}(\xi)}(0,t)\chi_{0}(x,\xi)
  +\int_{0}^{t}S_{A_{\gamma}(\xi)}(s,t)(\gamma B\chi(s)+m_{\xi}(x,\xi, s))ds.
\end{equation}

After integrating in  $\xi$, we find that, for  $t \in [0,T]$ and  $\vp \in C^\infty(\TT^N)$,
\begin{equation}\begin{split}\label{eq:splitup}
\int_{\TT^N} \vp(x)u(x,t)dx= & \int_{{\TT^N} \times \R}\vp(x)\chi(x,\xi,t)dxd\xi\\
= & \int_{{\TT^N} \times \R}\vp(x)S_{A_{\gamma}(\xi)}(0,t)\chi_{0}(x,\xi)dxd\xi\\
 & +\int_{0}^{t} \int_{{\TT^N} \times \R}\gamma B(S^*_{A_{\gamma}(\xi)}(s,t)\vp)(x)\chi(x,\xi,s)dxd\xi ds\\
 & -\int_{0}^{t}\int_{{\TT^N}\times \R} \partial_{\xi}(S^*_{A_{\gamma}(\xi)}(s,t)\vp)(x)dm(x,\xi,s),
\end{split}\end{equation}
where $S^*_{A_{\gamma}(\xi)}$ denotes the adjoint semigroup to $S_{A_{\gamma}(\xi)}$.
\smallskip

Accordingly, in the sense of distributions in $x$,  we write (recall that, a.s. in $\omega$, $u \in C([0,\infty); L^1(\TT^N))$)
\begin{equation}\label{takis5}
u(t)=  u^{0}(t)+u^{1}(t)+Q(t),
\end{equation}
where, for $\vp \in C^\infty(\TT^N)$, 
\begin{align}
\<u^0(t),\vp\>&
:=\int_{{\TT^N}\times \R}  \vp(x) S_{A_{\gamma}(\xi)}(0,t)\chi_{0}(x,\xi)dxd\xi, \label{takis6}\\
\<u^1(t),\vp\>&
:=\int_{0}^{t}\int_{{\TT^N}\times \R} \gamma(B(S^*_{A_{\gamma}(\xi)}(s,t)\vp)(x)\chi(x,\xi,s)dxd\xi ds \label{takis7}
\end{align}
and
\begin{equation}
\<Q(t),\vp\> :=  -\int_{0}^{t}\int_{{\TT^N}\times\R}\partial_{\xi}(S^*_{A_{\gamma}(\xi)}(s,t)\vp)(x)m(x,\xi,s)dxd\xi ds\label{takis8}.
\end{equation}

Next, we estimate each of \eqref{takis6}, \eqref{takis7} and \eqref{takis8} separately using averaging techniques. 
\smallskip

In the analysis we need a basic integral estimate which is proved in Appendix~\ref{lemma}. For its statement  it is convenient to introduce, for each  $b:\R\to\R^{N}$ and $f\in L^{2}$,
the function $\phi(\cdot; b,f):\R^N \to \R$ given by
\begin{equation}\label{takis9}
\phi(w; b,f):=e^{-\frac{|w|^{2}}{a}}\int_\R e^{ib(\xi)\cdot w}f(\xi)d\xi.
\end{equation}

\begin{lem}
\label{lem:integral_estimate}Let $b:\R\to\R^{N}$ be such that, for all $\ve>0,z\in\R^{N}$ and some nondecreasing $\iota: [0,\infty) \to [0,\infty)$ 
with  $\lim_{\ve \to 0}i(\ve)=0$,
\[
|\{\xi \in \R :|b(\xi)-z|\le\ve\}|\le\iota(\ve).
\]
Then, for all $a>0$ and $f\in L^{2}(\R)$, 
\[ \|\phi(\cdot;b,f)\|^2_{L^{2}} \le2\sqrt{a\pi}\int_{0}^\infty \tau e^{-\tau^{2}}\iota(\frac{2\tau }{\sqrt{a}})d\tau\|f\|_{2}^{2}.
\]
\end{lem}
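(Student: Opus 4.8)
The plan is to compute $\|\phi(\cdot;b,f)\|_{L^2}^2$ directly, by expanding the modulus squared and performing the Gaussian integration in $w$ \emph{first}. Writing out the definition \eqref{takis9} gives
\[
\|\phi(\cdot;b,f)\|_{L^2}^2=\int e^{-2|w|^2/a}\int_\R\int_\R e^{i(b(\xi)-b(\eta))\cdot w}f(\xi)\overline{f(\eta)}\,d\xi\,d\eta\,dw .
\]
After justifying the interchange of integrals by Fubini--Tonelli (the $L^2$ bound on $f$ together with the Gaussian decay in $w$ makes the integrand absolutely integrable), I would carry out the explicit Gaussian integral in $w$ using the standard identity $\int e^{-c|w|^2+i\zeta\cdot w}\,dw=(\pi/c)^{N/2}e^{-|\zeta|^2/(4c)}$, with $\zeta=b(\xi)-b(\eta)$. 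This produces a symmetric, nonnegative kernel
\[
\|\phi(\cdot;b,f)\|_{L^2}^2=c_{a}\int_\R\int_\R e^{-c_a'|b(\xi)-b(\eta)|^2}f(\xi)\overline{f(\eta)}\,d\xi\,d\eta ,
\]
where the prefactor $c_a$ and the exponent constant $c_a'$ are read off from the Gaussian integral; the key gain is that all dependence on $w$, and hence on the (rough) quantity that $w$ encodes in the application, has been eliminated in favor of $|b(\xi)-b(\eta)|$.

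Next I would reduce the bilinear form to a single-variable quantity. Since the kernel $K(\xi,\eta)=e^{-c_a'|b(\xi)-b(\eta)|^2}$ is symmetric and nonnegative, the elementary inequality $|f(\xi)\overline{f(\eta)}|\le\tfrac12(|f(\xi)|^2+|f(\eta)|^2)$ together with the symmetry of $K$ (that is, Schur's test) yields
\[
\Big|\int_\R\int_\R K(\xi,\eta)f(\xi)\overline{f(\eta)}\,d\xi\,d\eta\Big|\le\|f\|_2^2\,\sup_{\xi\in\R}\int_\R K(\xi,\eta)\,d\eta .
\]
This reduces everything to controlling $\sup_{z\in\R^N}\int_\R e^{-c_a'|b(\eta)-z|^2}\,d\eta$, which is precisely where the nondegeneracy hypothesis on $b$ must enter.

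The crux, and the step I expect to require the most care, is converting the measure-theoretic hypothesis $|\{\xi:|b(\xi)-z|\le\varepsilon\}|\le\iota(\varepsilon)$ into the claimed integral bound. For fixed $z$, writing $F(r)=e^{-c_a'r^2}$ and applying the layer-cake (distribution-function) formula to the nonincreasing $F$ with $F(\infty)=0$,
\[
\int_\R e^{-c_a'|b(\eta)-z|^2}\,d\eta=\int_0^\infty(-F'(r))\,\big|\{\eta:|b(\eta)-z|<r\}\big|\,dr\le\int_0^\infty 2c_a'\,r\,e^{-c_a'r^2}\,\iota(r)\,dr ,
\]
where the inequality uses the hypothesis together with the monotonicity of $\iota$. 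A change of variables $r=2\tau/\sqrt a$ then turns the right-hand side into $\int_0^\infty\tau e^{-\tau^2}\iota(2\tau/\sqrt a)\,d\tau$, and collecting the Gaussian prefactor from the first step produces the stated constant $2\sqrt{a\pi}$. The only genuinely delicate part is the bookkeeping of the constants $c_a$ and $c_a'$ through the explicit Gaussian integral and the substitution, so that the exponent comes out as exactly $e^{-\tau^2}$ and the argument of $\iota$ as exactly $2\tau/\sqrt a$; the absolute integrability needed for Fubini and the validity of the layer-cake formula follow from $f\in L^2(\R)$ and $\lim_{\varepsilon\to0}\iota(\varepsilon)=0$, while finiteness of the final $\tau$-integral is implicit in the statement. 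Everything else is soft.
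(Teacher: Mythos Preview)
Your direct approach---expand $|\phi(w)|^2$, integrate the Gaussian in $w$, then apply Schur's test to the resulting symmetric kernel---is valid and yields a bound of the correct shape, but it does \emph{not} reproduce the exact constants in the statement. Squaring $e^{-|w|^2/a}$ before integrating gives the kernel $e^{-(a/8)|b(\xi)-b(\eta)|^2}$, i.e.\ $c_a'=a/8$; the substitution that makes the exponent exactly $e^{-\tau^2}$ is then $r=2\sqrt{2}\,\tau/\sqrt{a}$, not $r=2\tau/\sqrt{a}$. You therefore obtain (in the one-dimensional normalization the paper uses) the bound
\[
\sqrt{2a\pi}\int_0^\infty \tau e^{-\tau^2}\iota\Big(\frac{2\sqrt{2}\,\tau}{\sqrt{a}}\Big)\,d\tau\,\|f\|_2^2,
\]
which is neither the stated inequality nor directly comparable to it for a general nondecreasing $\iota$. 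For the power-law $\iota(\ve)\sim\ve^\theta$ used in the applications the scaling in $a$ is identical, so your variant would serve equally well there; but as a proof of the lemma \emph{as written} the bookkeeping you flag as ``delicate'' does not close.

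The paper's argument takes a different route: it passes to the Fourier side via Plancherel, writing
\[
\hat\phi(z)=\sqrt{a\pi}\int e^{-(a/4)|b(\xi)-2\pi z|^2}f(\xi)\,d\xi,
\]
applies Cauchy--Schwarz in $\xi$ to $|\hat\phi(z)|^2$, bounds one of the two resulting factors uniformly in $z$ by the same layer-cake computation (now with exponent $a/4$, whence exactly $2\tau/\sqrt{a}$ in the argument of $\iota$), and integrates the other factor explicitly in $z$ using $\int e^{-(a/4)|b(\xi)-2\pi z|^2}\,dz=1/\sqrt{a\pi}$. The structural reason for the discrepancy is that Plancherel-then-Cauchy--Schwarz keeps the original Gaussian width in each factor, whereas squaring the Gaussian first halves it.

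One further small point: your Fubini justification is not quite right, since $f\in L^2(\R)$ alone does not make $|f(\xi)||f(\eta)|$ integrable over $\R^2$. This is harmless---approximate by $f\in L^1\cap L^2$ and pass to the limit using that the final bound depends only on $\|f\|_2$---but it should be said.
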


\subsection{The estimate of  \texorpdfstring{$u^{0}$}{u\^\{0\}}} Taking Fourier transforms  in \eqref{takis6} yields, for each $n\in\Z^N$, 
\[
\hat{u}^{0}(n,t)=\int e^{-i\mathbf{a}(\xi)\boldsymbol{\b}(t)\cdot n-\gamma(|n|^{2\alpha}+1)t}\hat{\chi}_{0}(n,\xi)d\xi.
\]

It is immediate, in view of \eqref{takis2},  that 
\[
\hat{u}^{0}(0,t)= \int e^{-\g t}\hat \chi_{0}(0,\xi)d\xi= \int \int e^{-\g t}  \chi_{0}(x, \xi)d\xi dx=\int e^{-\g t} u^{0}(x)dx=0.
\]

When  $n\in \Z^N\setminus \{0\}$,  integrating in time, taking expectations and using the scaling properties of the Brownian paths, we find
\begin{align*}
\E\int_{0}^{T}|\hat{u}^{0}(n,t)|^{2}dt 
& =\E\int_{0}^{T}\Big|\int e^{-i\mathbf{a}(\xi)\boldsymbol {\b}({t})\cdot n-\gamma(|n|^{2\alpha}+1)t}\hat{\chi}_{0}(n,\xi)d\xi\Big|^{2}dt\\
 & =\int_{0}^{T}e^{-2\g(|n|^{2\alpha}+1)t}\E\Big|\int e^{-i\mathbf{a}(\xi)\boldsymbol{\b}({t|n|^{2}})\cdot\frac{n}{|n|}}\hat{\chi}_{0}(n,\xi)d\xi\Big|^{2}dt\\
 & =\int_{0}^{T}e^{-2\g(|n|^{2\alpha}+1)t}\int\Big|\int e^{-i\mathbf{a}(\xi)w\cdot\frac{n}{|n|}}\hat{\chi}_{0}(n,\xi)d\xi\Big|^{2}\frac{e^{-\frac{|w|^{2}}{2|n|^{2}t}}}{\sqrt{2\pi|n|^{2}t}}dwdt\\
 & =\int_{0}^{T}\frac{e^{-2\g(|n|^{2\alpha}+1)t}}{\sqrt{2\pi|n|^{2}t}}\int\Big|e^{-\frac{|w|^{2}}{4|n|^{2}t}}\int e^{-i\mathbf{a}(\xi)\frac{n}{|n|}\cdot w}\hat{\chi}_{0}(n,\xi)d\xi\Big|^{2}dwdt.
\end{align*}
\smallskip

Using  Lemma \ref{lem:integral_estimate} with $a=4|n|^{2}t$, $b(\xi)=\mathbf{a}(\xi)\cdot \frac{n}{|n|}$ and $i(\ve)\sim\ve^{\theta}$ we get
\begin{align*}
\E\int_{0}^{T}|\hat{u}^{0}(n,t)|^{2}dt & \le 2\sqrt{a\pi} \int_{0}^{T}\frac{e^{-2\g(|n|^{2\alpha}+1)t}}{\sqrt{2\pi|n|^{2}t}}\int_{0}^\infty \tau e^{-\tau^{2}}\iota(\frac{2\tau}{\sqrt{a}})d\tau dt\|\hat{\chi}_{0}(n,\cdot)\|_{2}^{2}\\
 & \lesssim a^{\frac{1-\theta}{2}} \int_{0}^{T}\frac{e^{-2\g(|n|^{2\alpha}+1)t}}{\sqrt{2\pi|n|^{2}t}}\int_{0}^\infty\tau ^{1+\theta}e^{-\tau^{2}}d\tau dt\|\hat{\chi}_{0}(n,\cdot)\|_{2}^{2}\\
 & \lesssim \int_{0}^{T}e^{-2\g(|n|^{2\alpha}+1)t}(|n|^{2}t)^{-\frac{\theta}{2}}dt\|\hat{\chi}_{0}(n,\cdot)\|_{2}^{2}\\
 & \lesssim  |n|^{-\theta}\int_{0}^{T}e^{-2\g(|n|^{2\alpha}+1)t}t{}^{-\frac{\theta}{2}}dt\|\hat{\chi}_{0}(n,\cdot)\|_{2}^{2}\\
& \lesssim  |n|^{-\theta}\g^{-\frac{2-\theta}{2}}(|n|^{2\alpha}+1)^{-\frac{2-\theta}{2}} \int_0^\infty e^{-t} t^{-\frac{\theta}{2}} dt \|\hat\chi_{0}(\cdot,\cdot)\|_{2}^{2},
\end{align*}
and, hence, 
\begin{equation}\begin{aligned}\label{eq:u_0-est}
\E\int_{0}^{T}|\hat{u}^{0}(n,t)|^{2}dt  
&\lesssim |n|^{-\theta}\g^{-\frac{2-\theta}{2}}(|n|^{2\alpha}+1)^{-\frac{2-\theta}{2}}\|\hat{\chi}_{0}(n,\cdot)\|_{2}^{2}\\
&\lesssim \gamma^{- \frac{2-\theta}{2}}\|\hat{\chi}_{0}(n,\cdot)\|_{2}^{2}.
\end{aligned}\end{equation}

Combining the previous estimates, after summing over $n$,  we obtain 
\begin{equation}\begin{aligned}\label{takis1001-2}
\E\int_{0}^{T}\|u^{0}(t)\|_{2}^{2}dt
&= \E\int_0^T\|\hat u(t)\|^2 dt 
\lesssim  \gamma^{-\frac{2-\theta}{2}}\|\chi_{0}(\cdot,\cdot)\|_{2}^{2}\\
& \lesssim \gamma^{-\frac{2-\theta}{2}}\|\chi_{0}(\cdot,\cdot)\|_{1}= \gamma^{-\frac{2-\theta}{2}}\|u_{0}\|_{1}.
\end{aligned}\end{equation}

\subsection{The  estimate of  \texorpdfstring{$u^{1}$}{u\^\{1\}} }
Let $\bar \omega_n :=\g(|n|^{2\a}+1)$. For each $n\in \Z^N$, the Fourier transform $\hat{u}^{1}(n,t)$ of $u^{1}(t)$ in $x$ is given by 
\begin{align*}
\hat{u}^{1}(n,t) & = {\bar \omega_n} \int_{0}^{t}\int e^{-i\mathbf{a}(\xi)(\boldsymbol{\b}(t)-\boldsymbol{\b}(s))\cdot n-\g(|n|^{2\alpha}+1)(t-s)}\hat{\chi}(n,\xi,s)d\xi ds\\
 & ={\bar \omega_n} \int_{0}^{t} e^{-\bar \omega_n(t-s)}\int e^{-i\mathbf{a}(\xi)(\boldsymbol{\b}({t})-\boldsymbol{\b}({s}))\cdot n}\hat{\chi}(n,\xi,s)d\xi ds.
\end{align*}

Integrating in $t$, taking expectation and using  that $\int_{0}^{t} {\bar \omega_n} e^{-{\bar \omega_n}r}dr\le1$, we find 
\begin{align*}
 \E\int_{0}^{T}|\hat{u}^{1}|^{2}(n,t)dt
=& \E \int_{0}^{T} \big | \int_{0}^{t} {\bar \omega_n} e^{-{\bar \omega_n}(t-s)} \int e^{-i\mathbf{a}(\xi)(\boldsymbol{\b}(t)-\boldsymbol{\b}(s))\cdot n}\hat{\chi}(n,\xi,s)d\xi ds \big |^{2}dt\\
= & \E \int_{0}^{T} \big | \int_{0}^{t} {\bar \omega_n} e^{-{\bar \omega_n}r} \int e^{-i\mathbf{a}(\xi)(\boldsymbol{\b}({t})-\boldsymbol{\b}({t-r}))\cdot n}\hat{\chi}(n,\xi,t-r)d\xi dr \big | ^{2}dt\\
\le & \E \int_{0}^{T}
\int_{0}^{t} \bar \omega_ne^{-{\bar \omega_n}r}\big |\int e^{-i\mathbf{a}(\xi)(\boldsymbol{\b}({t})-\boldsymbol{\b}({t-r}))\cdot n}\hat{\chi}(n,\xi,t-r)d\xi \big |^{2}drdt.
\end{align*}

In view of the facts that  $\hat{\chi}$ is $\mcF_{t}$-adapted and the increments $\boldsymbol{\b}(t)-\boldsymbol{\b}({t-r})$ are independent of $\mcF_{t-r}$,  using the scaling properties of the Brownian motion we find 
\begin{align*}
 & \E \big | \int e^{-i\mathbf{a}(\xi)(\boldsymbol{\b}({t})-\boldsymbol{\b}({t-r}))\cdot n}\hat{\chi}(n,\xi,t-r)d\xi \big |^{2}\\
 & =\E [\E\big | \int e^{-i\mathbf{a}(\xi)(\boldsymbol{\b}({t})-\boldsymbol{\b}({t-r}))\cdot n}\hat{\chi}(n,\xi,t-r)d\xi\big |^{2}|\mcF_{t-r}]\\
 & =\E \td \E\big | \int e^{-i\mathbf{a}(\xi)(\boldsymbol{\b}({t})-\boldsymbol{\b}({t-r}))(\td \o)\cdot n}\hat{\chi}(n,\xi,t-r)(\o)d\xi\big |^{2}\\
 & =\E \td \E \big | \int e^{-i\mathbf{a}(\xi)\boldsymbol{\b}({|n|^{2}r})(\td \o)\cdot\frac{n}{|n|}}\hat{\chi}(n,\xi,t-r)(\o)d\xi \big |^{2} \\
 & =\E \int\big | \int e^{-i\mathbf{a}(\xi)w\cdot\frac{n}{|n|}}\hat{\chi}(n,\xi,t-r)d\xi \big |^{2}\frac{e^{-\frac{|w|^{2}}{2|n|^{2}r}}}{\sqrt{2\pi|n|^{2}r}}dw\\
 & =\frac{1}{\sqrt{2\pi|n|^{2}r}}\E \int \big |e^{-\frac{|w|^{2}}{4|n|^{2}r}}\int e^{-i\mathbf{a}(\xi)w\cdot\frac{n}{|n|}}\hat{\chi}(n,\xi,t-r)d\xi \big |^{2}dw,
\end{align*}
where $\td \E$ denotes the expectation with respect to $\td \o$.
\smallskip

Employing  again Lemma \ref{lem:integral_estimate} with $a=4|n|^{2}r$ and  $b(\xi)=\mathbf{a}(\xi)\cdot \frac{n}{|n|}$ we get 
\begin{align*}
  &\int\big | e^{-\frac{|w|^{2}}{4|n|^{2}r}}\int e^{-i\mathbf{a}(\xi)w\cdot\frac{n}{|n|}}\hat{\chi}(n,\xi,t-r)d\xi \big |^{2}dw \\
  &\lesssim \sqrt{a}\int_{0}^\infty \tau e^{-\tau^{2}}\iota(\frac{2\tau}{\sqrt{a}})d\tau \|\hat{\chi}(n,\cdot,t-r)\|_{2}^{2}\\[1mm]
 & \lesssim a^{\frac{1-\theta}{2}}\int_{0}^\infty \tau^{\theta+1}e^{-\tau ^{2}}d\tau \|\hat{\chi}(n,\cdot,t-r)\|_{2}^{2}
  \lesssim (|n|^{2}r)^{\frac{1-\theta}{2}}\|\hat{\chi}(n,\cdot,t-r)\|_{2}^{2},
\end{align*}
and, hence,
$$  \E\big | \int e^{-i\mathbf{a}(\xi)(\boldsymbol{\b}({t})-\boldsymbol{\b}({t-r}))\cdot n}\hat{\chi}(n,t-r)d\xi\big |^{2}\\
  \lesssim (|n|^{2}r)^{-\frac{\theta}{2}}\E\|\hat{\chi}(n,\cdot,t-r)\|_{2}^{2}.$$
Combining all the above estimates we find
$$\E\int_{0}^{T}|\hat{u}^{1}|^{2}(n,t)dt 
 \lesssim \int_0^T\int_0^t \bar \omega_ne^{-{\bar \omega_n}r} (|n|^{2}r)^{-\frac{\theta}{2}} \E\|\hat{\chi}(n,\cdot,t-r)\|_{2}^{2}drdt.$$
Young's  inequality then yields
$$\E\int_{0}^{T}|\hat{u}^{1}|^{2}(n,t)dt 
 \lesssim \int_0^T \bar \omega_ne^{-{\bar \omega_n}r} (|n|^{2}r)^{-\frac{\theta}{2}} dr \int_0^T \E\|\hat{\chi}(n,\cdot,r)\|_{2}^{2}dr,$$
and, in view of the fact that, for $\theta\in [0,1]$,
$$
\int_{0}^{T} \bar \o_{n}e^{-\bar \o_{n}r}r{}^{-\frac{\theta}{2}}dr  \le \bar \o_{n}^{1+\frac{\theta}{2}}\int_{\R}e^{- \bar \o_{n}r}(\bar \o_{n}r){}^{-\frac{\theta}{2}}dr 
  =\bar \o_{n}^{\frac{\theta}{2}}\int_{\R}e^{-r}r{}^{-\frac{\theta}{2}}dr <\infty,
$$
we conclude  that, for $n \in \Z^N \setminus \{0\}$,
\begin{equation}\label{eq:u^1-est}
 \E\int_{0}^{T}|\hat{u}^{1}|^{2}(n,t)dt \lesssim \g^{\frac{\theta}{2}}(|n|^{2\a-2}+|n|^{-2})^{\frac{\theta}{2}}\E\|\hat{\chi}(n,\cdot,\cdot)\|_{L^{2}(\R \times [0,T])}^{2},
 \end{equation}
while,  in view of  \eqref{eq:mean-preservation}, 
\begin{align*}
\hat{u}^{1}(0,t) &  =\int_{\TT^N} u^{1}(t,x)dx  =\int_{0}^{t} \int_{{\TT^N} \times \R}\g e^{-\g(t-s)}\hat{\chi}(0,\xi,s)d\xi dxds\\
& =\int_{0}^{t}\g e^{-\g(t-s)}\int_{\TT^N} u(x,s)dxds =0.
\end{align*}
Since 
\begin{align*}
\E\|\hat{\chi}\|_{L^{2}(\Z^N \times \R \times [0,T])}^{2} &=  \sum_{n}\E\int_0^T \int_\R |\hat{\chi}(n,\xi,t)|^{2}d\xi dt =\E\|\chi \|_{L^{2}(\TT^N \times \R \times [0,T])}^{2}\\
& \le  \E\|\chi \|_{L^{1}(\TT^N \times \R \times [0,T])}
=  \E\int_{0}^{T}\|u(t)\|_{1}dt,
\end{align*}
we conclude that
\begin{equation}\label{takis10001}
\E\int_{0}^{T}\|u^{1}(t)\|_{2}^{2}dt  \lesssim \g^{\frac{\theta}{2}}\E\int_{0}^{T}\|u(t)\|_{1}dt.
\end{equation}

\subsection{The estimate of  \texorpdfstring{$Q$}{Q}}
 For $\l\ge 0$ and $\vp \in L^\infty( \Omega \times [0,T]; C^\infty(\TT^N))$ (in what follows, unless necessary, we do not display the dependence of $\vp$ in $\omega$),  let 
\begin{align*}
 &<(-\D)^{\frac{\l}{2}}Q(t),\vp(t)>:= \int_{0}^{t}\int\partial_{\xi}(S_{A_{\gamma}(\xi)}(s,t)((-\D)^{\frac{\l}{2}}\vp(t))(x)dm(x,\xi,s )\\
  & =\int_{0}^{t}\int \boldsymbol a'(\xi)(\boldsymbol\b_{t}-\boldsymbol\b_{s})\cdot D S_{A_{\gamma}(\xi)}^{*}(s,t)((-\D)^{\frac{\l}{2}}\vp(t))(x)dm(x,\xi,s),
\end{align*}
where the second equality is immediate from the definitions of $S_{A_{\gamma}(\xi)}$ and $S^*_{A_{\gamma}(\xi)}$.
\smallskip

To conclude the ongoing proof it is enough to take  $\l=0$, while $\l>0$ is needed for Theorem~\ref{thm:reg}. Since the arguments are similar, we present here the general case. 
\vskip.0125in

We  aim for a bound of  the $L^{1}(\TT^{N} \times \R \times [0,T] \times \Omega)$ norm of $u$ and, hence, we need to estimate 
\begin{align*}
  &\E\int_{0}^{T}\<(-\D)^{\frac{\l}{2}}Q(t),\vp(t)\>dt =\\ &\E\int_{0}^{T}\int_{0}^{t}\int_{{\TT^N} \times \R}   \boldsymbol a'(\xi)(\boldsymbol\b({t})-\boldsymbol\b(s))\cdot D S_{A_{\gamma}(\xi)}^{*}(s,t)((-\D)^{\frac{\l}{2}}\vp(t))(x)dm(x,\xi,s)dt.
\end{align*}
In view of  \cite[Lemma 9]{DV13}, for any  $\psi \in C^\infty(\TT^N)$, 
we have
\begin{align*}
&\|  D(S_{A_{\gamma}(\xi)}^{*}(s,t)(-\D)^{\frac{\l}{2}}\psi)\|_{\infty}=\| D \left(e^{-\g(t-s)B}(-\D)^{\frac{\l}{2}}\psi(\cdot)\right)\left(\cdot-\mathbf{a}(\xi)(\boldsymbol{\b}({t})-\boldsymbol{\b}({s}))\right)\|_{\infty} \\ 
&=\| D \left(e^{-\g(t-s)B}(-\D)^{\frac{\l}{2}}\psi(\cdot)\right)\left(\cdot-\mathbf{a}(\xi)(\boldsymbol{\b}({t})-\boldsymbol{\b}({s}))\right)\|_{\infty}\\
& \le e^{-\g(t-s)}\| D e^{-\g(t-s)(-\D)^\a}(-\D)^{\frac{\l}{2}}\psi\|_{\infty} \lesssim e^{-\g(t-s)}(\gamma(t-s))^{-\frac{\l+1}{2\a}}\|\psi\|_{\infty}. 
\end{align*}
Let
$\mu_{\a,\l}:=\frac{\l+1}{2\a}.$ It follows that, for all  $\vp \in L^\infty( \Omega \times [0,T]; C^\infty(\TT^N)),$ 
\begin{align*}
 & \E\int_{0}^{T}\<(-\D)^{\frac{\l}{2}}Q(t),\vp(t)\>dt \\
 & = \E\int_{0}^{T}\int_{0}^{t}\int_{{\TT^N} \times \R} \partial_{\xi}(S_{A_{\gamma}(\xi)}(s,t)(-\D)^{\frac{\l}{2}}\vp(t))dm(x,\xi,s)dt\\
  & \lesssim \|\vp\|_{\infty}\E\int_{0}^{T}\int_{0}^{t}|\boldsymbol\b({t})-\boldsymbol\b({s})|e^{-\g(t-s)}(\gamma(t-s))^{-\mu_{\a,\l}}\int_{{\TT^N} \times \R} (1+|\xi|^m)dm(x,\xi,s)dt.
\end{align*}

At this point, as in the previous step, we take conditional expectations and use that $\boldsymbol\b({t})-\boldsymbol\b({t-r})$ is independent of $\mathcal F_{t-r}$. The argument is, however, more complicated due to the lack of regularity of $m$ which requires the use of an additional regularization layer.
\smallskip

Let $l$ be the random measure on $[0,T]$ defined by 
$$l([s,t]):=\int_s^t \int_{{\TT^N}\times \R} (1+|\xi|^m)dm(x,\xi,r).$$ Since $m$ is a kinetic measure, we can choose $\mcF_t$-adapted approximations $l^\ve$ such that,  $\P$-a.s.,  $l^\ve \in C([0,T])$ and  $l^\ve \to l$ weak $\star$. Hence, 
\begin{align*}
  &\E\int_{0}^{T}\int_{0}^{t}|\boldsymbol\b({t})-\boldsymbol\b({s})|e^{-\g(t-s)}(\gamma(t-s))^{-\mu_{\a,\l}}\int_{{\TT^N}  \times \R} (1+|\xi|^m)dm(x,\xi,s)dt\\
  &=\E\int_{0}^{T}\int_{0}^{t}|\boldsymbol\b({t})-\boldsymbol\b({s})|e^{-\g(t-s)}(\gamma(t-s))^{-\mu_{\a,\l}}dl(s)dt\\
  &=\lim_{\ve\to 0}\E\int_{0}^{T}\int_{0}^{t}|\boldsymbol\b({t})-\boldsymbol\b({s})|e^{-\g(t-s)}(\gamma(t-s))^{-\mu_{\a,\l}}l^\ve(s) dsdt.
\end{align*}
Using the independence of $\boldsymbol\b({t})-\boldsymbol\b({s})$ from $\mcF_s$ and the $\mcF_s$-measurability of $l^\ve$ we find
\begin{align*}
  \E|\boldsymbol\b({t})-\boldsymbol\b({s})|l^\ve(s)
   &=\E[\E|\boldsymbol\b({t})-\boldsymbol\b({s})|l^\ve(s)|\mcF_s] 
=\E|\boldsymbol\b({t})-\boldsymbol\b({s})|\E l^\ve(s) \\
   &= (t-s)^{\frac{1}{2}} \E l^\ve(s).
\end{align*}
Employing again Young's inequality we obtain
 \begin{align*}
   &\E\int_{0}^{T}\int_{0}^{t}|\boldsymbol\b({t})-\boldsymbol\b({s})|e^{-\g(t-s)}(\gamma(t-s))^{-\mu_{\a,\l}}\int_{{\TT^N} \times \R} (1+|\xi|^m)dm(x,\xi,s)dt\\
   &=\lim_{\ve\to 0}\int_{0}^{T}\int_{0}^{t} (t-s)^{\frac{1}{2}} e^{-\g(t-s)}(\gamma(t-s))^{-\mu_{\a,\l}}\E l^\ve(s) dsdt \\
   &\le \lim_{\ve\to 0}\int_{0}^{T}t^{\frac{1}{2}} e^{-\g t}(\gamma t)^{-\mu_{\a,\l}}dt \int_0^T\E l^\ve(t) dt \\
   &\le\int_{0}^{T} t^{\frac{1}{2}}e^{-\g t}(\gamma t)^{-\mu_{\a,\l}}dt \E\int_0^T \int_{{\TT^N}\times \R} (1+|\xi|^m)dm(x,\xi,t).
 \end{align*}
In conclusion,
\begin{align*}
&\E\int_{0}^{T}\<(-\D)^{\frac{\l}{2}}Q(t),\vp(t)\>dt \\
& \lesssim \|\vp\|_{\infty}\gamma^{-\mu_{\a,\l}}\int_{0}^{T}e^{-\g t}t{}^{\frac{1}{2}-\mu_{\a,\l}}dt\E\int_{0}^{T}\int_{{\TT^N}\times \R} (1+|\xi|^m)dm(x,\xi,s).
\end{align*}

Moreover note that, if $\mu_{\a,\l}<\frac{3}{2}$, then
\begin{align*}
\int_{0}^{T}t{}^{\frac{1}{2}-\mu_{\a,\l}}e^{-\g t}dt  &=\g^{-\frac{1}{2}+\mu_{\a,\l}}\int_{0}^{T}(\g t){}^{\frac{1}{2}-\mu_{\a,\l}}e^{-\g t}dt =\g^{-\frac{3}{2}+\mu_{\a,\l}}\int_{\R_{+}}t{}^{\frac{1}{2}-\mu_{\a,\l}}e^{-t}dt \\
&\le C \g^{-\frac{3}{2}+\mu_{\a,\l}}.
\end{align*}
We use next Lemma \ref{lem:m-bound} and, in view of all the above,  we get 
\begin{align}\label{eq:Q_est}
  & \E\int_{0}^{T} \<(-\D)^{\frac{\l}{2}} Q(t),\vp(t)\>dt
   \lesssim  \|\vp\|_{\infty}\g^{-\frac{3}{2}}(\|u_{0}\|_{2}^{2}+\|u_{0}\|_{m+2}^{m+2}).
\end{align}
As mentioned earlier, for an estimate on the energy decay it is enough to consider $\l=0$ in which case $\mu_{\a,0}  =\frac{1}{2\a}$ and, assuming $\a>\frac{1}{3}$, we obtain the estimate
\begin{equation}\label{takis100001}
\E\int_{0}^{T} \<Q(t),\vp(t)\>dt
   \lesssim \g^{-\frac{3}{2}}\|\vp\|_{\infty}(\|u_{0}\|_{2}^{2}+\|u_{0}\|_{m+2}^{m+2}).
 \end{equation}

\subsection{The proof of Theorem \ref{main}}
Since, for all  $\vp \in L^\infty( \Omega \times [0,T]; C^\infty(\TT^N))$, 
 $$\<u(t),\vp\> = \<u^{0}(t),\vp\>+\<u^{1}(t),\vp\>+\<Q(t),\vp\>,$$
we find 
\begin{align*}
  &\E\int_0^T \<u(t),\vp(t)\> dt\\
  &\le \|\vp\|_\infty \E\int_0^T [\|u^{0}(t)\|_1 + \|u^{1}(t)\|_1]dt + \E\int_0^T \<Q(t),\vp(t)\>dt\\
  &\le 2 \|\vp\|_\infty T^{\frac{1}{2}}\left(\E\int_{0}^{T}[\|u^{0}(t)\|_{2}^{2}+\|u^{1}(t)\|_{2}^{2}]dt\right)^{\frac{1}{2}} + \E\int_0^T \<Q(t),\vp(t)\>dt.
\end{align*}
The inequality above as well as \eqref{takis1001-2}, \eqref{takis10001} and \eqref{takis100001}  and  the Cauchy-Schwartz  inequality yield, for some $C>0$,  the following sequence of inequalities. 
\begin{align*}
 & \E\int_0^T\<u(t),\vp(t)\> dt\\
 &  \le C \|\vp\|_\infty T^{\frac{1}{2}}({\gamma^{-\frac{2-\theta}{2}}}\|u_{0}\|_{1}+\g^{\frac{\t}{2}}\E\int_{0}^{T}\|u(t)\|_{1}dt)^{\frac{1}{2}}+\g^{-\frac{3}{2}} (\|u_{0}\|_{2}^{2}+\|u_{0}\|_{m+2}^{m+2})\\
& \le C \|\vp\|_\infty T^{\frac{1}{2}}({\gamma^{-\frac{2-\theta}{2}}}\|u_{0}\|_{1}+ 4TC^2\g^{\theta}+\frac{1}{4TC^2}(\E\int_{0}^{T}\|u(t)\|_{1}dt)^{2})^{\frac{1}{2}} 
 \\&\qquad\qquad\qquad+\g^{-\frac{3}{2}}(\|u_{0}\|_{2}^{2}+\|u_{0}\|_{m+2}^{m+2})\\
 & \le   \|\vp\|_\infty
  [C(T^{\frac{1}{2}}{\gamma^{-\frac{2-\theta}{4}}}\|u_{0}\|_{1}^{\frac{1}{2}}+T\g^{\frac{\theta}{2}}+\g^{-\frac{3}{2}}(\|u_{0}\|_{2}^{2}+\|u_{0}\|_{m+2}^{m+2})) +\frac{1}{2}\E\int_{0}^{T}\|u(t)\|_{1}dt],
 \end{align*}
Hence, 
\begin{align*}
 &\E\int_{0}^{T}\|u(t)\|_{1}dt \\
 &\le C(T^{\frac{1}{2}}{\gamma^{-\frac{2-\theta}{4}}}\|u_{0}\|_{1}^{\frac{1}{2}}+T\g^{\frac{\theta}{2}}+\g^{-\frac{3}{2}}(\|u_{0}\|_{2}^{2}+\|u_{0}\|_{m+2}^{m+2})) +\frac{1}{2}\E\int_{0}^{T}\|u(t)\|_{1}dt
\end{align*}
and, thus,
\begin{align*}
\E\frac{1}{T}\int_{0}^{T}\|u(t)\|_{1}dt & \lesssim T^{-\frac{1}{2}}{\gamma^{-\frac{2-\t}{4}}}\|u_{0}\|_{1}^{\frac{1}{2}}+\g^{\frac{\t}{2}}+\g^{-\frac{3}{2}}T^{-1}(\|u_{0}\|_{2}^{2}+\|u_{0}\|_{m+2}^{m+2}),
\end{align*}
and, since  $t\mapsto\|u(t)\|_{1}$ in non-increasing, 
\begin{align*}
\E\|u(T)\|_{1} & \le\E\frac{1}{T}\int_{0}^{T}\|u(t)\|_{1}dt\\
 & \lesssim{T^{-\frac{1}{2}}}{\gamma^{-\frac{2-\theta}{4}}}\|u_{0}\|_{1}^{\frac{1}{2}}+\g^{\frac{\theta}{2}}+\g^{-\frac{3}{2}}T^{-1}(\|u_{0}\|_{2}^{2}+\|u_{0}\|_{m+2}^{m+2}).
\end{align*}
Choosing $\g =T^{-a}$ we get 
\begin{align*}
\E\|u(T)\|_{1} & \lesssim T^{-\frac{1}{2}+a(\frac{2-\theta}{4})}\|u_{0}\|_{1}^{\frac{1}{2}}+2T^{-\frac{a\theta}{2}}+
T^{\frac{3a}{2}-1}(\|u_{0}\|_{2}^{2}+\|u_{0}\|_{m+2}^{m+2}).
\end{align*}
Finally, we  let $a=\frac{2}{3+\t}$ and obtain, assuming that  $T\ge1$,
\begin{align*}
\E\|u(T)\|_{1} & \lesssim T^{-\frac{\theta}{3+\theta}}\left(\|u_{0}\|_{1}^{\frac{1}{2}}+1+\|u_{0}\|_{2}^{2}+\|u_{0}\|_{m+2}^{m+2}\right);
\end{align*}
note that the rate is independent of the choice of $\a$.  
\smallskip

It follows that  $\d_{0}$ is the unique invariant measure and, since $t\mapsto\|u(t)\|_{1}$ is pathwise non-increasing, we have that a.s.
\begin{equation}\label{takis1000}
\lim_{t\to \infty}  \|u(t)\|_{1}=0.
\end{equation} 

\subsection{The random attractor} Since $L^{1}(\Omega)$ is separable, there exist a countable set $M \subset L^1(\Omega)\cap L^2(\Omega)$, dense in $ L^1(\Omega)$ and  $\O_{0}\subseteq\O$ of full $\P$-measure, such that,  for all $u_{0}\in M$ and all $\o\in\O_{0}$, as $t\to\infty$,
\[
\|\vp(t,\o)u_{0}\|_{1}=\|u^{u_{0}}(t,\o)\|_{1}\to0. 
\]

Fix $\ve>0$ and a  compact $K\subset L^{1}(\Omega) $. There exist a positive integer $L=L(K,\ve)$ and  $\{x_{i}\}_{i=1}^{L} \subset M$  such that $K \subset \Cup_{i=1}^L B(x_i, \frac{1}{2}\ve)$. It follows that, for all $\o\in\O_{0}$,  there exists $t_0(\ve) >0$ such that, if $t\ge t_{0}(\ve)$,
$$\|\vp(t,\o)x_{i}\|_{1}\le\frac{\ve}{2}.$$ The pathwise contraction property  in $L^{1}(\Omega)$ then yields  that  $\vp(t,\o)(K)$ has a cover by balls of radius $\frac{\ve}{2}$ centered at $\{\vp(t,\o)x_{i}\}_{i=1}^{L}$ and, thus, for all $\o\in\O_{0}$   and   $t\ge t_{0}(\ve)$, $$\sup_{x\in K}\|\vp(t,\o)x\|_{1}\le\ve.$$ Hence, $0$ is a forward random attractor.
 \smallskip
 
Moreover, in view of the $\P$-invariance of $\t_t$, $\sup_{x\in K}\|\vp(t,\t_{-t}\o)x\|_{1}\to0$ in probability.  Since 
 $t\mapsto\|\vp(t,\t_{-t}\o)x\|_{1}$ is pathwise non-increasing, the above observation also implies, that, $\P$-a.s., 
$$\sup_{x\in K}\|\vp(t,\t_{-t}\o)x\|_{1}\to0$$
and, thus, $0$ is also a pullback random attractor.

\section{Rate of convergence for deterministic SCL.}\label{sec:det_case}
The techniques used in the proof of Theorem \ref{main} may also be employed  in the deterministic setting. In fact, the arguments are somewhat simpler and the estimates can be taken from \cite{DV13} since no difficulties arise  when the flux is not stochastic. 
\begin{proof}[Proof of Theorem \ref{prop:det}]
We follow the proof Theorem \ref{main} noting that instead of applying  Lemma \ref{lem:integral_estimate} we use \cite[Lemma 2.4]{BD99} which yields the estimates:
\begin{align*}
\ \   \int _0^T \|u^0(t)\|_{H^{\a+(\frac{1}{2}-\a)\t}}dt& \lesssim \g^{\t-1} \|u_0\|_{L^1}dt,\\
  \int _0^T \|u^1(t)\|_{H^{(\frac{1}{2}-\a)\t}}dt& \lesssim \g^\t \int_0^T \|u^1(t)\|_{L^1}dt,
  \end{align*}
and   
\[  \|Q\|_{L^1([0,T];W^{\l,1})} \lesssim \g^{-2} (\|u_{0}\|_{2}^{2}+\|u_{0}\|_{m+2}^{m+2}).\]
Proceeding as before we get 
\begin{align*}
  \|u(T)\|_{1}  \lesssim{T^{-\frac{1}{2}}}{\gamma^{-\frac{1-\theta}{2}}}\|u_{0}\|_{1}^{\frac{1}{2}}+\g^{\theta}+\g^{-2}T^{-1}(\|u_{0}\|_{2}^{2}+\|u_{0}\|_{m+2}^{m+2}).
\end{align*}
Choosing $\g =T^{-a}$ we find 
\begin{align*}
 \|u(T)\|_{1} & \lesssim T^{-\frac{1}{2}+a(\frac{1-\theta}{2})}\|u_{0}\|_{1}^{\frac{1}{2}}+2T^{-a\theta}+T^{2a-1}(\|u_{0}\|_{2}^{2}+\|u_{0}\|_{m+2}^{m+2}),
\end{align*}
which, for $a=\frac{1}{\t+2}$ and  for all $T\ge 1$, yields
\begin{align*}
 \|u(T)\|_{1} & \lesssim T^{-\frac{\t}{\t+2}}(1 +\|u_{0}\|_{m+2}^{m+2}).
\end{align*}
\end{proof}

\section{Regularity of pathwise entropy solutions.}\label{sec4}

In this section we present the proof of Theorem \ref{thm:reg}.  Since the argument is long, we divide it into three steps. In the first step, we obtain a bound, which we use in the second step to bootstrap and, hence, to improve the estimate. The conclusion is then shown in the last step.

\begin{proof}[Proof of Theorem \ref{thm:reg}]  \textit{Step 1:} We assume that, for some $\tau\in[0,1],$ $\chi = \chi(u) \in L^2(\R\times[0,T]\times\Omega;H^\tau(\R^N)),$  and note that
  \eqref{eq:u_0-est} with $\g=1$ yields 
\begin{align*}
\E\int_{0}^{T}\|u^{0}(t)\|_{H^{\t+\a (2-\t)}}^{2}dt & \lesssim \|u_0\|_1.
\end{align*}

Moreover, multiplying \eqref{eq:u^1-est}  by $|n|^\tau$ and taking  $\g=1$, we obtain
\begin{align*}
\E\int_{0}^{T}\|u^{1}(t)\|_{H^{\t(1-\a)+\tau}}^{2}dt & \lesssim \|\chi \|_{L^2(\R\times[0,T]\times\O;H^\tau(\R^N))}^2 .
\end{align*}
As in the last section, for all  $\vp \in L^\infty([0,T]\times\Omega; C^\infty(\TT^N))$, we have
\begin{align*}
  &\E\int_0^T \<(-\D)^\frac{\l}{2} u(t),\vp(t)\> dt\\
  &= \E\int_0^T\<(-\D)^\frac{\l}{2}u^{0}(t),\vp(t)\>+\<(-\D)^\frac{\l}{2}u^{1}(t),\vp(t)\>dt+\E\int_0^T\<(-\D)^\frac{\l}{2} Q(t),\vp(t)\>dt\\
  &\le \|\vp\|_{\infty} \E\int_0^T [\|u^{0}(t)\|_{W^{\l,1}} + \|u^{1}(t)\|_{W^{\l,1}}]dt + \E\int_0^T \<(-\D)^\frac{\l}{2} Q(t),\vp(t)\>dt\\
  &\le \|\vp\|_{\infty}T^\frac{1}{2}( \E\int_0^T[ \|u^{0}(t)\|_{{W^{\l,1}}}^2 + \|u^{1}(t)\|_{W^{\l,1}}^2]dt)^\frac{1}{2} + \E\int_0^T \<(-\D)^\frac{\l}{2} Q(t),\vp(t)\>dt.
\end{align*}
We assume for the moment that, for  $\l\in (0,1)$,
\begin{equation}\label{eq:assumed_emb}
   H^{\t+\a (2-\t)}, H^{\t(1-\a)+\tau}\hookrightarrow W^{\l,1},
\end{equation} 
where $\hookrightarrow$ denotes continuous embedding, and use that $t \to \|u(t)\|_1$ is non-increasing in $t$, the above estimates and  \eqref{eq:Q_est} with $\g=1$, to get  
\begin{equation}\begin{aligned}\label{eq:Q-est-1}
  &\E\int_0^T \<(-\D)^\frac{\l}{2} u(t),\vp(t)\> dt \\
  &\lesssim \|\vp\|_{\infty}(T^\frac{1}{2}( \|u_0\|_1 +\|\chi \|_{L^2(\R\times[0,T]\times\O;H^\tau(\R^N))}^2)^\frac{1}{2} 
  + \|u_{0}\|_{2}^{2}+\|u_{0}\|_{m+2}^{m+2})\\
  &\lesssim  \|\vp\|_{\infty}( T^\frac{1}{2}(1+T^\frac{1}{2})\|u_0\|_1^\frac{1}{2} + \|\chi \|_{L^2(\R\times[0,T]\times\O;H^\tau(\R^N))}+\|u_{0}\|_{m+2}^{m+2}) \\
  &\lesssim 
 \|\vp\|_{\infty}( 1 + \|\chi \|_{L^2(\R\times[0,T]\times\O;H^\tau(\R^N))}^2+\|u_{0}\|_{2+m}^{2+m}),
\end{aligned}\end{equation}
as long as
\begin{equation}\label{eq:mu_reg_cdt}
   \mu_{\a,\l}=\frac{\l+1}{2\a}<\frac{3}{2}.
\end{equation}
Next we take  $u_0 \in BV$ and recall that, in view of  Proposition \ref{prop:well-posedness}, $u(t)\in BV$  for all $t>0$,  Then \eqref{eq:Q-est-1} implies
\begin{equation}\label{eq:reg_prop_1}
  \E\int_0^T \|u(t)\|_{W^{\l,1}} dt
  \le C( 1 + \|\chi \|_{L^2(\R\times[0,T]\times\O;H^\tau(\R^N))}^2 + \|u_{0}\|_{2+m}^{2+m}).
\end{equation}
The general case $u_0 \in L^{2+m}(\TT^N)$  follows easily using approximations and the continuity of the solutions with respect to the initial condition.

From \eqref{eq:reg_prop_1} it follows 
\begin{equation}\label{eq:reg_prop_chi_1} 
   \|\chi\|_{L^1(\R\times[0,T]\times\O;W^{\l,1}(\R^N))}
     \le C( 1 + \|\chi \|_{L^2(\R\times[0,T]\times\O;H^\tau(\R^N))}^2 + \|u_{0}\|_{2+m}^{2+m}).
\end{equation}
Interpolating \eqref{eq:reg_prop_1}  with the obvious bound $\|\chi\|_{L^\infty(\R\times[0,T]\times\O\times\R^N)}\le 1$ yields
\begin{equation}\label{eq:reg_prop_chi}  \|\chi\|_{L^2(\R\times[0,T]\times\O;W^{\frac{\l}{2},2}(\R^N))}^2
     \le C( 1 + \|\chi \|_{L^2(\R\times[0,T]\times\O;H^\tau(\R^N))}^2 + \|u_{0}\|_{2+m}^{2+m}).
\end{equation}
\smallskip

It remains to choose $\l$ and $\a$ to justify the above calculations. To this end, we first note that \eqref{eq:mu_reg_cdt} is equivalent to
 $\l<3\a-1,$
and noting that $W^{r,p}\hookrightarrow W^{s,q}$ if $r\ge s$ and $\frac{r}{N}-\frac{1}{p} \ge \frac{s}{N}-\frac{1}{q} $, we further require that  
$  \t(1-\a)+\tau\ge \l,$
which gives 
\begin{align*}
\frac{\t(1-\a)+\tau}{N}-\frac{1}{2} \ge \frac{\l}{N}-1,
\end{align*}
and, hence, the requirement that 
  $$ \l <(3\a-1) \wedge (\t(1-\a)+\tau).$$
Maximizing the right hand side yields $\a=\frac{\t+1+\tau}{\t+3} \in (0,1)$ and we obtain
\begin{equation}\label{eq:lambda_reg}
  \l < 3\a-1=\frac{2\t}{\t+3}+\frac{3}{\t+3}\tau .
\end{equation}
We note that, since $\tau \le 1$, we have  $2\a \ge \tau$, and, hence, 
\begin{align*}
  \frac{\t+\a (2-\t)}{N}-\frac{1}{2} \ge 
\frac{\t(1-\a)+\tau}{N}-\frac{1}{2}.
\end{align*}
It follows now that $H^{\t+\a (2-\t)} \hookrightarrow W^{\l,1}$.
\smallskip

\textit{Step 2:} Bootstrapping

We bootstrap the arguments in the first step. Using \eqref{eq:reg_prop_chi} with $\tau=0$ and noting that 
$$\|\chi \|_{L^2(\R\times[0,T]\times\O;L^2(\R^N))}\le \|\chi \|_{L^1(\R\times[0,T]\times\O;L^1(\R^N))} \lesssim \|u_0\|_1$$
gives, for 
$\l < \l_0 := \frac{2\t}{\t+3},$ 
$$
\|\chi\|_{L^2(\R\times[0,T]\times\O;W^{\frac{\l}{2},2}(\R^N))}
     \lesssim( 1 + \|u_{0}\|_{2+m}^{2+m}).
$$

Next we iterate this argument and get, from \eqref{eq:reg_prop_chi_1}, that for 
$$\l < \l_n := \frac{2\t}{\t+3}+\frac{3}{\t+3}\frac{\lambda_{n-1}}{2},$$
and some constant $C_n$, 
$$
\|\chi\|_{L^1(\R\times[0,T]\times\O;W^{\l,1}(\R^N))}
     \le C_n( 1 + \|u_{0}\|_{2+m}^{2+m}).
$$
Since, as $n\to \infty$, 
$$\l_n \uparrow \l_* = \frac{4\t}{2\t+3},$$
using \eqref{eq:reg_prop_1}  we obtain that, for any 
$$ \l < \l_* = \frac{4\t}{2\t+3},$$
there is some $C>0$ such that
\begin{equation}\label{eq:reg_prop}
 \E\int_0^T \|u(t)\|_{W^{\l,1}} dt
    \le C( 1 + \|u_{0}\|_{2+m}^{2+m}).
\end{equation}

\textit{Step 3:} Conclusion

In view of  \eqref{eq:reg_prop}, for each $\d>0,$ there is a $\d_0 \in [0,\d]$ such that  $\E \|u(\d_0)\|_{W^{\l,1}} < \infty.$ The spatial homogeneity and the contraction property with respect to the initial condition imply that $t\mapsto \|u(t)\|_{W^{\l,1}}$ is non-increasing, and, hence,
\begin{align*}
\sup_{t \ge \d}   \E\|u(t)\|_{W^{\l,1}} < \infty.
\end{align*}
\end{proof}

\section{Pathwise quasi-solutions and regularization by noise.}\label{sec:regularization}

\subsection {Deterministic background}  We recall  (see \cite{DLOW03,P13}) that $u\in C([0,T];L^{1}(\TT))\cap L^{\infty}(\TT \times [0,T])$ is 
a quasi-solution to the deterministic Burgers' equation \eqref{eq:det_scl} if, for every convex entropy-entropy flux pair $(\eta,q)$,
 $-\mu_{\eta}:=\partial_{t}\eta(u)+\partial_{x}q(u)$ is a Radon measure.  Note that the difference between quasi- and entropy solutions is that for the latter $\mu_{\eta}$ is,
 in addition, non-negative.
\smallskip

Moreover (see, for example, Benilan and Kruzkov \cite{BK96}), $u\in C([0,T];L^{1}(\TT))\cap L^{\infty}(\TT \times [0,T])$ is an entropy sub-solution (resp. super-solution) to  \eqref{eq:det_scl}, if, for every convex entropy-entropy flux pair $(\eta,q)$ with $\eta$ nondecreasing (resp. nonincreasing), $-\mu_{\eta}:=\partial_{t}\eta(u)+\partial_{x}q(u)$ is a nonnegative Radon measure.
\smallskip

The next claim is about the existence of quasi-solutions to \eqref{eq:det_scl}.

\begin{lem}\label{lem:takis10001} (i) Any entropy sub- or super-solution to \eqref{eq:det_scl} is also a quasi-solution.

(ii) An entropy solution to  $\partial_t u+\partial_{x}u^{2}=g$,  with source $g \in L^1_{loc}(\TT)$, is a quasi-solution to  \eqref{eq:det_scl}.
\end{lem}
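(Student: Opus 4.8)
The plan is to prove the two parts separately, the unifying idea being an elementary decomposition: on the bounded range of $u$ every convex entropy differs from a \emph{monotone} convex entropy only by an affine one, and the affine entropy $\eta(\xi)=\xi$ produces exactly the defect $\partial_t u+\partial_x u^{2}$ of the equation itself, which is controlled by the sub/super-solution property. Part (ii) will then be almost immediate once the definition of an entropy solution with a source is unpacked.

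For part (i), I would first fix $M>0$ with $|u|\le M$ a.e., which is legitimate since $u\in L^{\infty}$; because $\eta(u)$ and $q(u)$ depend only on the restriction of $(\eta,q)$ to $[-M,M]$, it suffices to decompose $\eta$ there and extend. Given a convex entropy $\eta$, I set $c:=\eta'(-M)$ and $\eta_{0}:=\eta-c\,\mathrm{id}$, so that $\eta_{0}'=\eta'-c\ge 0$ on $[-M,M]$; thus $\eta_{0}$ extends to a convex nondecreasing entropy, while $\mathrm{id}(\xi)=\xi$ is an affine (degenerate convex) nondecreasing entropy with flux $\xi^{2}$. Since the flux depends linearly on $\eta$ through $q'=2\xi\eta'$, one has, up to an additive constant irrelevant under $\partial_x$,
\[
\partial_t \eta(u) + \partial_x q(u) = \big[\partial_t \eta_{0}(u) + \partial_x q_{\eta_{0}}(u)\big] + c\,\big[\partial_t u + \partial_x u^{2}\big].
\]
If $u$ is an entropy sub-solution, the first bracket is a Radon measure because $\eta_{0}$ is convex nondecreasing, and the second is a Radon measure because $\mathrm{id}$ is convex nondecreasing; their sum is a signed Radon measure, so $u$ is a quasi-solution. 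For a super-solution I would argue symmetrically, taking $c:=\eta'(M)$ so that $\eta_{0}=\eta-c\,\mathrm{id}$ is convex nonincreasing, and recovering $\partial_t u+\partial_x u^{2}$ as a measure by testing the super-solution property against the convex nonincreasing entropy $-\mathrm{id}$ (whose defect is $-(\partial_t u+\partial_x u^{2})$).

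For part (ii), I would simply unpack the definition: an entropy solution of $\partial_t u+\partial_x u^{2}=g$ satisfies, for every convex pair $(\eta,q)$, the inequality $\partial_t\eta(u)+\partial_x q(u)\le \eta'(u)\,g$, that is $\partial_t\eta(u)+\partial_x q(u)=\eta'(u)\,g-\nu_{\eta}$ with $\nu_{\eta}\ge 0$ a non-negative Radon measure. Since $u\in L^{\infty}$ the factor $\eta'(u)$ is bounded, and $g\in L^{1}_{loc}(\TT)$ forces $\eta'(u)\,g\in L^{1}_{loc}$, which defines a Radon measure absolutely continuous with respect to Lebesgue measure. Hence $\partial_t\eta(u)+\partial_x q(u)$ is a difference of two Radon measures, thus a signed Radon measure, which is precisely the defining property of a quasi-solution to \eqref{eq:det_scl}.

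The routine ingredients are the linearity of the map $\eta\mapsto q$ and the reduction to the essential range of $u$; the only genuinely conceptual step, which I expect to be the crux, is the observation in part (i) that subtracting the correct multiple of the identity converts an arbitrary convex entropy into a monotone one while shifting all of the ``uncontrolled'' behavior onto the equation's own defect $\partial_t u+\partial_x u^{2}$ --- a quantity that the sub/super-solution property controls by definition. Everything else is bookkeeping.
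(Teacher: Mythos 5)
Your proof is correct and, for part (i), is essentially the paper's own argument: both decompose an arbitrary convex entropy into monotone convex entropies by means of an affine correction whose slope is (a subgradient of) $\eta'$ at the bottom of the range of $u$ --- the paper writes $\eta=\eta_{1}-\eta_{2}$ with both $\eta_{i}$ nondecreasing, you write $\eta=\eta_{0}+c\,\mathrm{id}$ with a signed constant $c$, which is the same decomposition packaged differently, and your appeal to the sub-solution property for the affine entropy $\mathrm{id}$ (and $-\mathrm{id}$ in the super-solution case) is exactly how the equation's own defect $\partial_{t}u+\partial_{x}u^{2}$ gets controlled in both arguments. Two cosmetic remarks: take $c$ to be a one-sided derivative (or any subgradient) of $\eta$ at $-M$, since a convex entropy need not be differentiable there --- the paper uses a subgradient $p_{0}$ at $s_{0}=-(\|u\|_{\infty}+1)$ for precisely this reason --- and your tacit extension of $\eta_{0}$ by a constant below $-M$ is what the paper carries out explicitly in the definition of $\eta_{1}$. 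For part (ii) the paper gives no proof, deferring to \cite{DLOW03,P13}; your direct argument (the nonnegative distribution $\nu_{\eta}$ is a Radon measure, $\eta'(u)g\in L^{1}_{loc}$ defines a Radon measure since $u\in L^{\infty}$, and a difference of Radon measures is a signed Radon measure) is the standard one and is correct.
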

\begin{proof}
The second assertion is proved in \cite{DLOW03, P13}.  For the first claim we consider here the case of $u$ being an entropy sub-solution, since the argument can be easily modified for super-solutions. 

\vskip.075in
Let $(\eta,q)$  be a convex entropy-entropy flux pair. In order to use the sub-solution property of $u$,  we show that it is possible to write $(\eta,q)$ as the difference of two convex entropy-entropy flux pairs $(\eta_1,q_1)$ and $(\eta_2,q_2)$ with  $\eta_1$ and  $\eta_2$ nonincreasing. We also remark that, since $u\in L^\infty(\TT \times [0,T]),$ it is enough to obtain this decomposition  in $[-\|u\|_\infty, \|u\|_\infty]$.
\smallskip

Fix $s_0:=-(\|u\|_\infty +1)$.
Since $\eta$ is convex, there exists some $p_0 \in \R$ such that, for all $s\in \R$,
 $$ \eta(s) \ge \eta(s_0) + p_0 (s-s_0).$$
 
 Let $\eta_1,\eta_2:\R \to \R$ be the convex and nondecreasing functions given by
 $$
 \eta_{1}(s):=\begin{cases}
      \eta(s)+|p_0|(s-s_0) \ \text{if } \ s\ge s_0,\\[1mm]
      \eta(s_0) \  \text{if} \  s< s_0,
\end{cases} 
\ \text{and} \ \  \eta_2(s):=|p_{0}|(s-s_0).
$$
It follows that   $$ \eta=\eta_{1}-\eta_{2}  \ \text{ and } q=q_{1}-q_{2}  \ \   \text{in } \  [s_0,\infty).$$

   Then  $-\mu_{\eta}:=\partial_{t}\eta(u)+\partial_{x}q(u)$ is a Radon measure, since  
\begin{align*}
  -\mu_{\eta}
  &:=\partial_{t}\eta(u)+\partial_{x}q(u)  =\partial_{t}\eta_{1}(u)+\partial_{x}q_{1}(u)-(\partial_{t}\eta_{2}(u)+\partial_{x}q_{2}(u)) \\
  &=(-\mu_{\eta_{1}})-(-\mu_{\eta_{2}}),
\end{align*}
 where $-\mu_{\eta_{1}},-\mu_{\eta_{2}}$ are the nonnegative Radon measures coming from the sub-solution property of $u$ with entropies $\eta_{1}, \eta_{2}$ respectively. 
\end{proof}

Although in general quasi-solutions need not be weak solutions,  \cite{DLW03} provides an explicit example of a quasi-solution  to \eqref{eq:det_scl} that is a weak but not an entropy solution. We remark that in \cite{DLW03}, which  is about the Cauchy problem in $\R$, is not required for  quasi-solutions to be continuous in $L^{1}(\TT)$. It follows, however, from the explicit construction therein, that the same argument also works in the periodic setting and yields a quasi-solution $u\in C([0,T];L^{1}(\TT))$ to \eqref{eq:det_scl}  with the above properties.
\smallskip

The kinetic formulation of a quasi-solution $u$ to \eqref{eq:det_scl}  (see \cite{DLOW03}) is that the $\chi$ defined as in \eqref{eq:char_fctn} satisfies,  in the sense of distributions,
\begin{equation}\label{eq:kinetic-quasi}
  \partial_{t}\chi+2\xi\partial_{x}\chi =\partial_{\xi}m,
\end{equation}
where $m$  is a Radon measure on $\TT\times\R\times [0,\infty)$,  which is supported in $[-\|u\|_{\infty},\|u\|_{\infty}]$ in the $\xi$-variable and has finite total variation $|m|$ in $\TT\times\R\times[0,T]$, for each $T>0$. 
\vskip.0125in

Once there is  a kinetic formulation, it is immediate that quasi-solutions can also be described using the convolution along characteristics, as it was done in Section \ref{sec:defn_pathwise_entropy} for entropy solutions, that is to assume \eqref{takis1} with $m$ as above.
\smallskip

Finally, it is straightforward that all the previous statements about quasi-, sub- and super-solutions extend to the time-dependent deterministic Burger's equation 
$$ \partial_u+ \partial_{x}u^2 \dot{\b}=0 \ \text{ in } \ \TT\times(0,\infty),$$
for any   smooth path $\b$.
\smallskip

\subsection{Pathwise quasi-solutions} Using the convolution by characteristics formulation, we introduce now  the notion of quasi-solutions to \eqref{eq:det_scl} to the stochastic Burger's equation \eqref{eq:stoch_burgers}.  For the definition we consider kinetic Radon measures, which have the same measurability properties as kinetic measures used in the definition of pathwise entropy solutions but need not be nonnegative.

\begin{defn}\label{def:path_e-soln-1} A map $u:\TT^N \times [0,\infty) \times \Omega \to \R$  such that, for all $T>0$,  $u\in  (L^{1}\cap L^\infty)(\TT^N \times [0,T]\times \Omega)$, and $\P$-a.s.\ in $\omega$, $u(\cdot, \omega)  \in C \big([0,T]; L^1(\TT^N)\big)\cap L^\infty (\TT^N \times [0,T])$, and $t\mapsto u(t, \cdot )$ is an $\mcF_{t}$-adapted  process in $L^1(\TT^N)$, is a pathwise quasi-solution to \eqref{eq:stoch_burgers}, if there exists a kinetic Radon measure $m$ such that, for all $y\in\TT^N$ and $\vr(x,y,\xi,t)=\rho_0(x-y+ \xi (\beta(t)-\beta(s)))$ with $\vr_{0}\in C^{\infty}(\TT^N)$ and all  $\vp\in C_{c}^{\infty}([0,T)),\psi\in C_c^\infty(\R)$, 
\begin{equation}\begin{split}\label{eq:quasi-stoch}
 & \int_{0}^{T}\int_{\R}\partial_{t}\vp(r)\psi(\xi)(\vr\ast\chi)(y,\xi,r)drd\xi+\int_{\R}\vp(0)\psi(\xi)(\vr\ast\chi)(y,\xi,0)d\xi\\
 & =\int_{0}^{T}\int_{{\TT^N}\times \R}\vp(r)\partial_{\xi}\left(\psi(\xi)\vr(x,y,\xi,r)\right)dm(x,\xi,r).
\end{split}\end{equation}
\end{defn}
\smallskip 

Next we present an example of a pathwise quasi-solution which is not a pathwise entropy solution.  Its construction is based on the observation in Lemma~\ref{lem:takis10001}(i) that, in the deterministic setting, entropy sub- and super-solutions are quasi-solutions. 
\smallskip

Fix a non-negative $u_0\in L^\infty(\TT)$ and a family $(\b^\ve)_{\ve>0}$ of  smooth paths  approximating the Brownian motion $\b$, that is, for $\ve>0$, $\b^\ve  \in C^\infty([0,\infty))$ and, as $\ve \to 0$, $\b^\ve \to  \b$ in $C([0,T])$ for all $T\ge 0$, and let $u^\ve$ be  the nonnegative entropy solution to
 \begin{equation*}
 \begin{cases}
 \partial_tu^\ve+ \partial_{x}(u^\ve)^2\dot{\b}^\ve=u^\ve  \ \text{ in } \ \TT\times(0,\infty),\\[1mm]
 u^\ve=u_0 \  \text{on} \  \TT \times \{0\}.
 \end{cases}
 \end{equation*}
Since $u^\ve$ is an entropy super-solution to 
 \begin{equation*}
 \begin{cases}
 \partial_tu^\ve+  \partial_{x}(u^\ve)^2\dot{\b}^\ve=0 \ \text{ in } \ \TT\times(0,\infty),\\[1mm]
 u^\ve=u_0 \  \text{on}  \  \TT \times \{0\},
 \end{cases}
 \end{equation*}
in view of  Lemma~\ref{lem:takis10001}, $u^\ve$ is also a quasi-solution  with dissipation measure $m^\ve$ having a non-vanishing negative part given by $\chi(u^\ve,\xi)u^\ve$. 
\smallskip

It follows from the density arguments in \cite{LPS13}, that  the  $u^\ve$'s converge in $C([0,T];L^1(\TT))$ to the pathwise entropy solution to
 \begin{equation*}
 \begin{cases}
 du + \partial_{x}u^2\circ d \b=u dt \ \text{ in } \ \TT\times(0,\infty),\\[1mm]
 u =u_0 \  \text{on} \  \TT \times \{0\}.
 \end{cases}
 \end{equation*}

Moreover, since the dissipation measures $m^\ve$'s  have finite total variation, we may extract subsequences $u^{\ve_n} \to u$ and  $m^{\ve_n} \rightharpoonup^* m$. Writing \eqref{eq:quasi-stoch} for $u^{\ve_n}$ and $m^{\ve_n}$ and then taking the limit yields that $u$ is a quasi-solution to \eqref{eq:stoch_burgers}, with dissipation measure $m$ having non-vanishing negative part $\chi(u,\xi)u$.
\smallskip

We remark that it can be shown that pathwise quasi-solutions do not satisfy, in general, the density property of the pathwise entropy solutions. Indeed quasi-solutions to 
equations with smooth paths do not converge, in general, to pathwise quasi-solutions. 

\subsection{The proof of Theorem \ref{thm:regularization}} We  present now the  
\begin{proof}[Proof of Theorem \ref{thm:regularization}]
We first note that the derivation of \eqref{eq:splitup} given in Appendix \ref{app:splitup} does not rely on the assumption that $m$ is a nonnegative measure. Hence, \eqref{eq:splitup} also holds for pathwise quasi-solutions. The proof now is the same as the one for Theorem \ref{thm:reg}, with the exception that in the estimation of $Q$ we cannot use  Lemma \ref{lem:m-bound}, since it is not satisfied for quasi-solutions. Instead, we observe (note that here we have $A(u)=u^{2}$  and $\t=1$) that 
\begin{align}
 & \E\int_{0}^{T}\<(-\D)^{\frac{\l}{2}}Q(t),\vp(t)\>dt\lesssim\|\vp\|_{\infty}\g^{-\frac{3}{2}}\E\int_{0}^{T}\int_{{\TT^{N}}\times\R}d|m|(x,\xi,s). \label{eq:Q_est-1}
\end{align}
It then follows, as before, that 
\[
 \E\int_{0}^{T}\<(-\D)^{\frac{\l}{2}}u(t),\vp(t)\>dt
 \lesssim\|\vp\|_{\infty}\left(1+\|u_{0}\|_{2}^{2}+\E|m|([0,T]\times\TT\times\R)\right),
\]
which implies the claim.
\end{proof}

        

\appendix

\section{Random dynamical systems and random attractors}\label{app:RDS}

We briefly recall some of the concepts used earlier in the proofs and  we refer, for example,  to Arnold \cite{A98}, Crauel and Flandoli  \cite{CF94} and Ochs \cite{O99}, for a comprehensive treatment. 
\smallskip 

Let $(E,d)$ and $\left(\Omega,\mcF,\P,\t\right)$ be respectively a complete, separable metric space and a metric dynamical system, where  $(\Omega,\mcF,\P)$ is,  a not necessarily complete, probability space  $(\Omega,\mcF,\P)$  and $\t:=\left(\theta_{t}\right)_{t\in\R}$ a group of jointly measurable maps on $\left(\Omega,\mcF,\P\right)$ which leaves $\P$ invariant.
\smallskip

A random dynamical system is a measurable map $\vp:[0,\infty) \times\O\times E\to E$ such that, for all  $ \o\in\O, x\in E$  and $s,t \in [0,\infty)$,
$$\vp(0,\o)x=x \  \text{and}  \  \vp(t+s,\o)=\vp(t,\t_{s}\o)\circ\vp(s,\o).$$
If $x\mapsto\vp(t,\o)x$ is continuous for all $t\in\R$, $\o\in\O$,  then $\vp$ is a continuous RDS. 
\smallskip

A family $\{D(\o)\}_{\o\in\Omega}$ of non-empty subsets of $E$ is called a random closed (resp. compact) set if it is $\P$-a.s. closed (resp. compact) and  $\mathcal{F}$-measurable, that is, for each $x\in E$, 
   $$\o\mapsto d(x,D(\o)):=\inf_{y\in D(\o)}d(x,y) \ \text{is $\mathcal{F}$-measurable}, $$
and is called $\varphi$-invariant, if for all $t\ge0$ and a.s. in  $\o\in\Omega$,
 $$\vp(t,\o)D(\o) = D(\t_t\o).$$
A random, compact set $A$ is called a pullback random attractor of the RDS $\varphi$, if 
$A$ is $\varphi$-invariant, and for every compact set $B$ in $E$ and a.s.,
\begin{equation}\label{eq:pullback}
  \lim_{t\to\infty}\sup_{x\in B}d(\varphi(t,\theta_{-t}\omega)x,A(\omega))=0. 
\end{equation}
If \eqref{eq:pullback} is replaced by
  $$  \lim_{t\to\infty}\sup_{x\in B}d(\varphi(t,\omega)x,A(\theta_{t}\omega))=0,$$
then $A$ is said to be a forward random attractor.

\section{The proof of Lemma~\ref{lem:integral_estimate}}\label{lemma}

We present here the proof of Lemma~\ref{lem:integral_estimate}.

\begin{proof}
We compute the Fourier transform $\hat{\phi} $ of $\phi$. Using the elementary fact that $\int e^{-2\pi i z\cdot w} e^-\frac{|w|^2}{a} dw= \sqrt{a\pi}e^{-a\pi^{2}|z|^{2}}$, we find
\begin{align*}
\hat{\phi}(z) & =\int e^{-2\pi iz\cdot w}\phi(w)dw =\int e^{-2\pi iz\cdot w}e^{-\frac{|w|^{2}}{a}}\int e^{ib(\xi)\cdot w}f(\xi)d\xi dw\\
 & =\int\int e^{-2\pi i(z-\frac{1}{2\pi}b(\xi))\cdot w}e^{-\frac{|w|^{2}}{a}}dwf(\xi)d\xi
 =\int\sqrt{a\pi}e^{-|\sqrt{a}\pi(\frac{1}{2\pi}b(\xi)-z)|^{2}}f(\xi)d\xi,
\end{align*}
and, hence, 
\begin{align*}
|\hat{\phi}(z)|^{2} & =\left|\int\sqrt{a\pi}e^{-|\sqrt{a}\pi(\frac{1}{2\pi}b(\xi)-z)|^{2}}f(\xi)d\xi\right|^{2}\\
 & \le a\pi\int e^{-|\sqrt{a}\pi(\frac{1}{2\pi}b(\xi)-z)|^{2}}d\xi\int e^{-|\sqrt{a}\pi(\frac{1}{2\pi}b(\xi)-z)|^{2}}f^{2}(\xi)d\xi.
\end{align*}

Next we use the assumption on $b$ which enters in the following straightforward estimate:
\begin{align*}
\int e^{-|\sqrt{a}\pi(\frac{1}{2\pi}b(\xi)-z)|^{2}}d\xi & =\int_0^\infty  2\tau e^{-\tau{}^{2}}|\{\xi: \sqrt{a}\pi|\frac{1}{2\pi}b(\xi)-z)|<\tau\}|d\tau\\
= & \int_0^\infty  2\tau e^{-\tau^{2}}|\{\xi :|b(\xi)-2\pi z|<\frac{2\tau}{\sqrt{a}}\}|d\tau\\
\le & \int_0^\infty  2\tau e^{-\tau^{2}}\iota(\frac{2\tau}{\sqrt{a}})d\tau.
\end{align*}
Hence
\begin{align*}
|\hat{\phi}(z)|^{2} & \le2a\pi\int_{0}^{\infty}\tau e^{-\tau^{2}}\iota(\frac{2\tau}{\sqrt{a}})d\tau\int e^{-|\sqrt{a}\pi(\frac{1}{2\pi}b(\xi)-z)|^{2}}f^{2}(\xi)d\xi.
\end{align*}
\smallskip

Integrating the above inequality in $z$ and using that  
$$\int e^{-|\sqrt{a}\pi(\frac{1}{2\pi}b(\xi)-z)|^{2}}dz=\int e^{-a\pi^{2}|z|^{2}}dz=
\frac{1}{\sqrt{a\pi}}$$ yields
\begin{align*}
\int|\hat{\phi}(z)|^{2}dz & \le2\sqrt{a\pi}\int_{0}^\infty \tau e^{-\tau^{2}}\iota(\frac{2\tau}{\sqrt{a}})d\tau\|f\|_{2}^{2}.
\end{align*}
\end{proof}

\section{The derivation of equation (\ref{eq:splitup})}\label{app:splitup}
We give here the details about the derivation of \eqref{eq:splitup}. Since $u\in C([0,T];L^1(\TT^N))$, the definition of pathwise entropy solutions yields that, for all $t\ge 0$, all $\vr$ given by \eqref{takis10000} with $\vr_{0}\in C^{\infty}(\TT^N)$ and all  $\vp\in C^{1}(\TT^N \times [0,t])$, 
\begin{align*}
 & \int_{0}^{t}\int_{\R\times\TT^N}\partial_{s}\vp(y,s)(\vr\ast\chi)(y,\xi,s)dyd\xi ds+\int_{\R\times\TT^N}\vp(y,0)(\vr\ast\chi)(y,\xi,0)dyd\xi\\
 & = \int_{\R\times\TT^N}\vp(y,t)(\vr\ast\chi)(y,\xi,t)dyd\xi \\
 &\quad+ \int_{\TT^N} \int_{0}^{t}\int_{{\TT^N}\times \R}\partial_{\xi}\big(\vp(y,s)\vr(x,y,\xi,s)\big)dm(x,\xi,s)dy.
\end{align*}
Replacing $\vp$ by $\vp(y- \boldsymbol{a}(\xi)\cdot \boldsymbol{\b}_t,s)$ implies 
\begin{align*}
 & \int_{0}^{t}\int_{\R\times\TT^N}\partial_{s}\vp(y,s)\vr_0(x-y-\boldsymbol{a}(\xi)\cdot (\boldsymbol{\b}_s- \boldsymbol{\b}_t))\chi(x,\xi,s)dxdyd\xi ds\\
 &+ \int_{\R\times\TT^N}\int_{\TT^N}\vp(y,0)\vr_0(x-y-\boldsymbol{a}(\xi)\cdot (\boldsymbol{\b}_0- \boldsymbol{\b}_t))\chi(x,\xi,0)dxdyd\xi \\
 & = \int_{\R\times\TT^N}\int_{\TT^N}\vp(y,t)\vr_0(x-y)\chi(x,\xi,t)dxdyd\xi\\
 &+ \int_{\TT^N} \int_{0}^{t}\int_{{\TT^N}\times \R}\partial_{\xi}\big(\vp(y,s)\vr_0(x-y-\boldsymbol{a}(\xi)\cdot (\boldsymbol{\b}_s- \boldsymbol{\b}_t))\big)dm(x,\xi,s)dy.
\end{align*}
Given  $\vp \in C^1(\TT^N)$ we use next $S^*_{\g B}(t-\cdot)\vp$ as a test function in the formula above. Since
  $$\partial_s (S^*_{\g B}(t-s))\vp =\g B (S^*_{\g B}(t-s))\vp,$$
we obtain
\begin{align*}
 & \int_{0}^{t}\int_{\R\times\TT^N} \g B (S^*_{\g B}(t-s)\vp)(y)\vr_0(x-y-\boldsymbol{a}(\xi)\cdot (\boldsymbol{\b}_s- \boldsymbol{\b}_t))\chi(x,\xi,s)dxdyd\xi ds\\
 &+\int_{\R\times\TT^N}\int_{\TT^N}(S^*_{\g B}(t)\vp)(y)\vr_0(x-y-\boldsymbol{a}(\xi)\cdot (\boldsymbol{\b}_0- \boldsymbol{\b}_t))\chi(x,\xi,0)dxdyd\xi \\
 & = \int_{\R\times\TT^N}\int_{\TT^N}\vp(y)\vr_0(x-y)\chi(x,\xi,t)dxdyd\xi\\
 &+ \int_{\TT^N} \int_{0}^{t}\int_{{\TT^N}\times \R}\partial_{\xi}\left((S^*_{\g B}(t-r)\vp)(y)\vr_0(x-y-\boldsymbol{a}(\xi)\cdot (\boldsymbol{\b}_s- \boldsymbol{\b}_t))\right)dm(x,\xi,s)dy,
\end{align*}
and, in view of \eqref{eq:semigroup},
\begin{align*}
 & \int_{0}^{t}\int_{\R\times\TT^N} \g B (S^*_{A_\g(\xi)}(s,t)\vp)(y)\vr_0(x-y)\chi(x,\xi,s)dxdyd\xi ds\\
 &+\int_{\R\times\TT^N}\int_{\TT^N}(S^*_{A_\g(\xi)}(0,t)\vp)(y)\vr_0(x-y)\chi(x,\xi,0)dxdyd\xi \\
 &= \int_{\R\times\TT^N}\int_{\TT^N}\vp(y)\vr_0(x-y)\chi(x,\xi,t)dxdyd\xi\\ 
 &+ \int_{\TT^N} \int_{0}^{t}\int_{{\TT^N}\times \R}\partial_{\xi}\left((S^*_{A_\g(\xi)}(s,t)\vp)(y)\right)\vr_0(x-y)dm(x,\xi,s)dy.
\end{align*}
Let $\vr_0^\ve$ be a approximation of the identity. Letting $\ve\to 0$  in the above equation with $\vr_0$ replaced by  $\vr_0^\ve$  yields
\begin{align*}
 \int_{\R\times\TT^N}\vp(x)\chi(x,\xi,t)dxd\xi
 &=\int_{\R\times\TT^N}(S^*_{A_\g(\xi)}(0,t)\vp)(x)\chi(x,\xi,0)dxd\xi\\
 &+\int_{0}^{t}\int_{\R\times\TT^N} \g B (S^*_{A_\g(\xi)}(s,t)\vp)(x)\chi(x,\xi,s)dxd\xi ds\\
 &- \int_{0}^{t}\int_{{\TT^N}\times \R}\partial_{\xi}\left((S^*_{A_\g(\xi)}(s,t)\vp)(x)\right)dm(x,\xi,s),
\end{align*}
which finishes the proof.

\bibliographystyle{acm}
\bibliography{refs}

\end{document}